\title[Asymptotic Dimension of Box Spaces]{The Asymptotic Dimension of Box Spaces for Elementary Amenable Groups}
\author{Martin Finn-Sell}
\address{Universit{\"a}t Wien, Fakult\"{a}t f\"{u}r Mathematik,  \phantom{-----------------------------------------}\linebreak \text{}\hspace{3.5mm} Oskar-Morgenstern-Platz 1,   1090 Wien, \"{O}sterreich }
\email{martin.finn-sell@univie.ac.at}
\author{Jianchao Wu}
\address{Westf{\"a}lische Wilhelms-Universit{\"a}t, Fachbereich Mathematik, \phantom{---------------------------}\linebreak \text{}\hspace{3.5mm} Einsteinstrasse 62, 48149 M{\"u}nster, Deutschland}
\email{jianchao.wu@uni-muenster.de}
\thanks{\emph{Supported by:} Martin Finn-Sell is partially supported by the ERC grant ANALYTIC 259527, and Jianchao Wu is supported by ERC Advanced Grant ToDyRiC 267079}
\theoremstyle{plain}
\newtheorem{Thm}{Theorem}[section]
\newtheorem{thmx}{Theorem}
\newtheorem{thm}[Thm]{Theorem}
\newtheorem{lem}[Thm]{Lemma}
\newtheorem{cor}[Thm]{Corollary}
\newtheorem{prop}[Thm]{Proposition}
\theoremstyle{definition}
\newtheorem{defn}[Thm]{Definition}
\newtheorem{ques}[Thm]{Question}
\newtheorem{rmk}[Thm]{Remark}
\newcommand{\N}{{\mathbb N}}
\newcommand{\Z}{{\mathbb Z}}
\numberwithin{equation}{section}
\newcommand{\asdim}{\mathrm{as}\dim}
\newcommand{\dist}{d}
\newcommand{\set}[1]{\left\{#1\right\}}
\newcommand{\diam}{\mathrm{diam}}
\begin{document}

\begin{abstract}
We show that the asymptotic dimension of box spaces behaves (sub)additively with respect to extensions of groups. As a result, we obtain that for an elementary amenable group, the asymptotic dimension of any of its box spaces is bounded above by its Hirsch length. This bound is shown to be an equality for a large subclass of groups including all virtually polycyclic groups.  
\end{abstract}
\maketitle

\section{Introduction}\label{sec:introduction}

Let $G$ be a residually finite group, and let $\sigma = ( G_{\alpha} )_{\alpha \in I}$ be a family of finite-index normal subgroups of $G$. Following Roe (\cite[Definition 11.24]{Roe2003} ), we may associate to this data a coarse space, $\square_\sigma G$, called the \emph{box space} of $G$ with respect to $\sigma$, defined to be the disjoint union of the corresponding finite quotients of $G$ together with some appropriate $G$-invariant coarse structure. The importance of the box space construction can be seen in the bridge it provides between coarse geometric properties of $\square_\sigma G$ on the one hand and group theoretic properties of $G$ on the other. This can be illustrated by a result of Guentner \cite[Proposition 11.39]{Roe2003}, which states: a residually finite discrete group $G$ is amenable if and only if the box space $\square_{\sigma} G$ has property A. There are now similar results for a-T-menability \cite{MR3105001,FinnSell20143758} and property (T) \cite{Willett:2014aa}. Another major application of the box space construction is to produce spaces that demonstrate certain coarse properties are not equivalent: a prominent example is the result of Arzhantseva, Guentner and \v{S}pakula \cite{MR2899681}, where they produce a box space of a free group with bounded geometry that does not have property A but does admit a coarse embedding into Hilbert space.

One novelty of our treatment is that we do not require $\sigma$ to only consist of normal subgroups. With this generality, one needs a condition that rules out certain pathological examples where the coarse structure of the box space $\square_\sigma G$ behaves badly in comparison with that of $G$ itself. The natural requirement turns out to be that the family $\sigma$ is \emph{semi-conjugacy-separating}, which asks not just for the ability to separate finite sets from the identity, but also to be able to separate entire conjugacy classes of finitely many elements from the identity. This condition guarantees that the family of quotient maps $\set{G \to G/G_{\alpha}}_{G_{\alpha} \in \sigma}$ accommodates arbitrarily large injective radii (see Section \ref{sec:preliminaries-coarse-geometry}). It is also general enough to cover the classical case of normal subgroups as any separating family of normal finite-index subgroups is semi-conjugacy-separating. 

A fundamental yet vastly useful invariant for coarse spaces is the \emph{asymptotic dimension} (see \cite{Gromov1993} and \cite{BD2008}). Many studies have been carried out to determine or bound the asymptotic dimension of groups, where the group itself is viewed as a coarse space by fixing a proper, right invariant metric on the group (e.g a (weighted) word-length metric associated to a generating set of the group). \emph{Finite} asymptotic dimension for a group has far-reaching consequences, for instance any group of finite asymptotic dimension satisfies the Novikov conjecture (\cite{Yu1998}). 

Related to this, a natural question to ask is when the box space of a residually finite group has finite asymptotic dimension. By the result of Guentner mentioned above (c.f.~ \cite[Proposition 11.39]{Roe2003}), such a group is necessarily amenable. This question has recently found application in the study of the Rokhlin dimension and the nuclear dimension of crossed product C*-algebras (\cite{SWZ2014}). To the best of our knowledge, this problem is far less well-understood than the previous one regarding the asymptotic dimension of the group itself. Beyond the easy cases of abelian groups and crystalographic groups, only recently was it shown that for a finitely-generated virtually nilpotent group, there exists a semi-conjugacy-separating family $\sigma$ of finite-index subgroups such that the asymptotic dimension of the box space $\square_\sigma G$ is finite (\cite{SWZ2014}). 

Given a group $G$, in order to show that any {box space} $\square_\sigma G$ has finite asymptotic dimension, it suffices to show it in the case when $\sigma$ consists of \emph{all} finite-index subgroups. We call the box space associated to this choice of $\sigma$ the \emph{extended full box space}, and denote it by $\square_\mathrm{F} G$. Similarly, if one is only interested in showing that for any separating family $\sigma$ of \emph{normal} finite-index subgroups, $\square_\sigma G$ has finite asymptotic dimension, then it suffices to look at the case when $\sigma$ consists of all normal finite-index subgroups. The resulting box space in this case is called the \emph{full box space}, and denoted by $\square_\mathrm{f} G$.  

In this paper, we develop an inductive method that allows us to estimate the asymptotic dimension of box spaces for a large class of groups. In particular, we prove:

\begin{thmx}\label{main-thm:inductive}
 Let $1 \to N \to G \to K \to 1$ be a short exact sequence of discrete groups. Then 
 \begin{align*}
  \asdim (\square_\mathrm{F} G) \leq & \ \asdim (\square_\mathrm{F} N) + \asdim (\square_\mathrm{F} K) \; ;  \\
  \asdim (\square_\mathrm{f} G) \leq & \ \asdim (\square_\mathrm{f} N) + \asdim (\square_\mathrm{f} K) \; .
 \end{align*}
\end{thmx}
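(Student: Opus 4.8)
The plan is to prove both inequalities at once by means of a fibered (Hurewicz-type) estimate for asymptotic dimension, applied uniformly across the quotients making up the box spaces. Throughout, write $\pi \colon G \to K$ for the quotient map with kernel $N$, and fix proper right-invariant length metrics on $G$ and $K$ coming from a generating set of $G$ whose image generates $K$, so that $\pi$ is $1$-Lipschitz. I will use the characterisation established in Section~\ref{sec:preliminaries-coarse-geometry} that $\asdim(\square_\sigma G) \leq d$ if and only if the family of quotient metric spaces $\set{G/L}_{L \in \sigma}$ has asymptotic dimension at most $d$ \emph{uniformly}: for every $R$ there is an $S$, independent of $L$, so that each $G/L$ admits a cover by $d+1$ families of mutually $R$-disjoint, $S$-bounded sets. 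Set $m = \asdim(\square_\mathrm{F} N)$ and $n = \asdim(\square_\mathrm{F} K)$; the goal is the uniform bound $m+n$ for the family $\set{G/L}$ indexed by all finite-index subgroups $L \leq G$, the normal case being identical as noted below.

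The geometric heart of the argument is the fibering of a single quotient. For a finite-index $L \leq G$, the subgroup $\pi(L) \leq K$ has finite index, and $\pi$ descends to a surjection $\bar{\pi}_L \colon G/L \to K/\pi(L)$, $gL \mapsto \pi(g)\pi(L)$, which one checks is $1$-Lipschitz uniformly in $L$; moreover $K/\pi(L)$ is one of the quotients comprising $\square_\mathrm{F} K$. I would then identify the preimage of a ball: for $y = \pi(g)\pi(L)$ a direct computation with cosets shows that $\bar{\pi}_L^{-1}\bigl(B_R(y)\bigr) = \bigcup_{k} \, \widetilde{k}\,g\cdot(NL/L)$, a union of at most $\lvert B_R^K(e) \rvert$ translates of $NL/L \cong N/(N \cap L)$, where $k$ ranges over the $R$-ball of $K$ about the identity and $\widetilde{k} \in G$ is a chosen lift. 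The number of translates, $\lvert B_R^K(e)\rvert$, depends only on $R$ and not on $L$, and each translate is identified (as a set) with the quotient $N/(N \cap L)$ occurring in $\square_\mathrm{F} N$.

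With this in hand the proof assembles from two standard ingredients, both applied with uniformity in $L$. First, since $\asdim(\square_\mathrm{F} N) = m$, the family $\set{N/(N\cap L)}$ has uniform asymptotic dimension $m$, and by the (uniform) finite-union theorem a union of at most $\lvert B_R^K(e)\rvert$ such translates again has asymptotic dimension at most $m$; hence the preimages $\bar{\pi}_L^{-1}(B_R(y))$ have uniform asymptotic dimension at most $m$. Second, the base quotients $K/\pi(L)$ have uniform asymptotic dimension at most $n$. The Hurewicz-type mapping theorem applied to the $1$-Lipschitz maps $\bar{\pi}_L$ then yields $\asdim(G/L) \leq m + n$ with control functions independent of $L$, giving the required uniform bound and hence $\asdim(\square_\mathrm{F} G) \leq m + n$. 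For the full box space one observes that when $L \trianglelefteq G$ one has $\pi(L) \trianglelefteq K$ and $N \cap L \trianglelefteq N$, so the base and fibre quotients occurring above lie in $\square_\mathrm{f} K$ and $\square_\mathrm{f} N$ respectively, and the identical argument gives the second inequality.

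I expect the main obstacle to be uniformity rather than the fibering itself: one must verify that the Hurewicz-type theorem and the finite-union theorem can be run with a single control function $R \mapsto S$ valid for \emph{all} the infinitely many quotients simultaneously, and, more delicately, that the subspace metric induced on each translate $\widetilde{k}g \cdot (NL/L) \subseteq G/L$ is comparable, uniformly in $L$, to the intrinsic quotient metric on $N/(N\cap L)$ used to define $\square_\mathrm{F} N$. This last comparison is precisely where the semi-conjugacy-separating hypothesis and the resulting arbitrarily large injective radii enter: they ensure that at any fixed scale $R$ the quotient maps are isometric on $R$-balls for all but a controlled part of the family, so that the local geometries of $G/L$, $K/\pi(L)$ and $N/(N\cap L)$ match up and the dimension estimates transfer uniformly.
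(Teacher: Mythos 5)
Your overall strategy is the same as the paper's: fibre each quotient $G/L$ over $K/\pi(L)$ via the $1$-Lipschitz map $\bar{\pi}_L$ and apply a uniform Hurewicz-type theorem (the paper's Theorem \ref{prop:cf3}, extracted from Brodskiy--Dydak--Levin--Mitra) with the fibres controlled by quotients of $N$. The set-level analysis of the fibres is essentially right. But there is a genuine gap at exactly the point you flag as the ``main obstacle,'' and the repair you propose is wrong. You suggest that the comparison between the subspace metric on a translate $\widetilde{k}g\cdot(NL/L)\subseteq G/L$ and the intrinsic quotient metric on a quotient of $N$ is ``precisely where the semi-conjugacy-separating hypothesis and the resulting arbitrarily large injective radii enter.'' Theorem \ref{main-thm:inductive} carries no such hypothesis: it is stated for an arbitrary short exact sequence of discrete groups, with no residual finiteness or separation assumption anywhere (the paper remarks explicitly after Proposition \ref{prop:technical-back-end} that none of the groups need be residually finite), and moreover a semi-conjugacy-separating condition on the family of \emph{all} finite-index subgroups of $G$ would give you nothing about injectivity radii of the particular quotient $G\to G/L$ you are working in. So your argument, as written, appeals to a mechanism that is neither available nor relevant, and the metric comparison is left unproved.

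The correct mechanism, which is the content of the paper's Lemma \ref{key-lemma}(5), is purely algebraic/metric and uses only right-invariance of $\dist_G$: right multiplication $\beta_g$ by $g$ is an isometry of $(G,\dist_G)$ carrying $N$ to $Ng$ and commuting with the left $N$-action, so $p_G(Ng)$, with the subspace metric from $\dist_{G/L}$, is isometric to $N/\bigl(N\cap gLg^{-1}\bigr)$ with its quotient metric; and $p_G(Ng)$ is an $R$-net in the full coarse fibre $p_G\bigl(P(N;R;\dist_G)\,g\bigr)$, so no finite-union theorem is needed either. Note the \emph{conjugate}: because left translation is not isometric for a right-invariant metric, the fibre over a point with lift $g$ is a copy of $N/(N\cap gLg^{-1})$, not of $N/(N\cap L)$ as you assert (your identification is only a bijection of sets, and the two quotient metrics genuinely differ for non-normal $L$). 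This is exactly why the paper's Proposition \ref{prop:technical-back-end} bounds $\asdim(\square_\sigma G)$ by $\asdim(\square_{\widehat{\sigma}}N)+\asdim(\square_{\pi(\sigma)}K)$ with $\widehat{\sigma}=\bigl(N\cap gG_{\alpha}g^{-1}\bigr)_{G_{\alpha}\in\sigma,\,g\in G}$ ranging over all conjugated intersections. The theorem survives because the family of all finite-index subgroups of $N$ is closed under this conjugation (so the fibres still lie in $\widehat{\square}_{\mathrm{F}}N$), and in the normal case $gLg^{-1}=L$ makes the issue disappear; but your proof must make this observation and replace the injectivity-radius appeal by the right-multiplication isometry argument. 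Two smaller points: your metrics ``coming from a generating set'' presuppose finite generation, whereas the theorem concerns arbitrary (countable) discrete groups, so you should work with an arbitrary proper right-invariant metric; and the uniformity of the Hurewicz step across uncountably many quotients is handled in the paper either by the family version (Theorem \ref{prop:cf3}) or by reduction to countable subfamilies (Proposition \ref{prop:box-space-countable}), either of which would complete your sketch.
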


Recall that a group $G$ is \textit{solvable} if there is a finite length composition series $\lbrace G_{i+1} \vartriangleleft G_{i} \rbrace_{i}$ for the group $G$ in which the factor groups $G_{i}/G_{i+1}$ are abelian, and \textit{elementary amenable} if $G$ can be constructed from finite groups and abelian groups through the processes of direct unions and extensions.

Let $\mathbf{Elem_{RF}}$ be the class of residually finite elementary amenable groups. It contains all residually finite virtually solvable groups and, in particular, all virtually polycyclic groups. These classes are readily accessible using traditional group theoretic methods and there are many numerical invariants that capture the complexity of such a group $G$ in terms of the abelian factors that appear. One prominent invariant is the \textit{Hirsch length}, $h(G)$, which for an abelian group is defined to be its dimension as a rational vector space and then extended to elementary amenable groups using properties for extensions and unions (see Section \ref{sect:elem}).

Applying Theorem \ref{main-thm:inductive}, we are able to produce an upper bound for the asymptotic dimension of any box space for a group in $\mathbf{Elem_{RF}}$.

\begin{thmx}\label{main-thm:bound}
 For any $G \in \mathbf{Elem_{RF}}$ and any semi-conjugacy-separating family $\sigma$ of finite-index subgroups of $G$, we have 
 \[
  \asdim(G) \leq \asdim(\square_\sigma G) \leq \asdim(\square_f G) \leq \asdim(\square_F G)  \leq h(G).
 \]
\end{thmx}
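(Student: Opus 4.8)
The plan is to read the displayed inequality as a chain of four comparisons and to locate the real content in the last one, $\asdim(\square_F G) \le h(G)$: the first three are soft coarse-geometric facts, while the last carries the inductive weight of Theorem~\ref{main-thm:inductive} together with the structure theory of elementary amenable groups of finite Hirsch length. If $h(G) = \infty$ the statement is vacuous, so I assume $h(G) < \infty$ throughout.

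For the leftmost inequality $\asdim(G) \le \asdim(\square_\sigma G)$, I would use that a semi-conjugacy-separating family accommodates arbitrarily large injective radii, as recorded in Section~\ref{sec:preliminaries-coarse-geometry}: each quotient then contains an isometrically embedded ball $B(e, r_\alpha)$ of $G$ with $r_\alpha \to \infty$. Since asymptotic dimension is monotone under coarse embeddings and is detected, with uniform control on covers, on such an exhausting family of balls, the dimension of $G$ cannot exceed that of the disjoint union $\square_\sigma G$. For the two monotonicity inequalities among box spaces, the point is that $\square_F G$ is built from all finite-index subgroups and is therefore the finest of the three; I would realize $\square_\sigma G$ and $\square_f G$ as coarse subspaces of $\square_F G$ and invoke monotonicity of $\asdim$ under subspaces. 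The only delicate part is that $\sigma$ may contain non-normal subgroups, so to compare with the normal box space $\square_f G$ I would pass each $G_\alpha \in \sigma$ to its normal core and transport the bound along the $1$-Lipschitz coarse quotient maps $G/\mathrm{core}(G_\alpha) \to G/G_\alpha$.

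The heart of the argument is $\asdim(\square_F G) \le h(G)$, which I would prove by induction built on Theorem~\ref{main-thm:inductive}. The base atoms are finite groups, where $\square_F$ is bounded and $\asdim = 0 = h$, and the infinite cyclic group $\Z$, where a direct computation gives $\asdim(\square_F \Z) \le 1 = h(\Z)$; free abelian groups then follow by applying Theorem~\ref{main-thm:inductive} to $1 \to \Z \to \Z^n \to \Z^{n-1} \to 1$, yielding $n = h(\Z^n)$. For a general $G \in \mathbf{Elem_{RF}}$ with $h(G) < \infty$, I would invoke the Hillman--Linnell structure theory: such a $G$ has a locally finite normal subgroup $T$ with $G/T$ virtually solvable of finite Hirsch length, admitting a (sub)normal series whose factors $F_i$ are finite or infinite cyclic, with exactly $h(G)$ infinite-cyclic factors. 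Applying Theorem~\ref{main-thm:inductive} along $1 \to T \to G \to G/T \to 1$ and then along the series for $G/T$, subadditivity collapses the estimate to
\[
 \asdim(\square_F G) \le \asdim(\square_F T) + \sum_i \asdim(\square_F F_i) \le 0 + \#\{i : F_i \cong \Z\} = h(G),
\]
once the locally finite part is shown to satisfy $\asdim(\square_F T) = 0$.

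The main obstacle, and the place where Theorem~\ref{main-thm:inductive} alone does not suffice, is the non-finitely-generated behaviour hidden in the \emph{direct union} clause defining elementary amenable groups. Already $\Z[1/p]$, which has $h = 1$, is an increasing union of copies of $\Z$, and its full box space is not covered by any finite iteration of extensions; handling it requires a permanence property showing that $\asdim(\square_F(-))$ does not jump at increasing unions, that is, $\asdim(\square_F G) \le \sup_i \asdim(\square_F G_i)$ for $G = \bigcup_i G_i$. Establishing such a continuity statement for box spaces is genuinely coarse-geometric rather than formal, since the quotients of the union group are not simply assembled from those of the pieces. I expect this union step, together with the base case $\asdim(\square_F T) = 0$ for locally finite $T$ (itself an instance of the same phenomenon, $T$ being a direct union of finite groups), to be the crux; once both are in place, the transfinite induction defining $\mathbf{Elem_{RF}}$ can be run with the Hirsch length as the controlling parameter.
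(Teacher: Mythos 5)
Your overall architecture agrees with the paper's: the first inequality is proved there as Proposition \ref{prop:compare-asdim-box-space-with-group} by exactly the injectivity-radius mechanism you describe (though the paper avoids any limiting argument over an exhausting family of balls: it lifts a single uniformly bounded cover of one quotient with injectivity radius $3S$ to a $G_{\alpha}$-periodic cover of all of $G$), and the final inequality is reduced, as you propose, to Theorem \ref{main-thm:inductive} together with the additivity $h(G)=h(N)+h(K)$ and (transfinite) induction. However, two of your steps are genuine gaps. The first is your comparison $\asdim(\square_\sigma G)\le\asdim(\square_f G)$ via normal cores: asymptotic dimension bounds do \emph{not} transport forward along $1$-Lipschitz surjections. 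The abelianisation $F_2\to\Z^2$ is $1$-Lipschitz for word metrics and raises $\asdim$ from $1$ to $2$; even coarsely finite-to-one maps can raise asymptotic dimension, and in your situation the fibres $G_\alpha/\mathrm{core}(G_\alpha)$ have unbounded cardinality and diameter across the family, so no uniform non-raising statement is available off the shelf. (Note that the paper itself only justifies comparisons with $\square_f$ by subfamily monotonicity for families consisting of normal subgroups; its proof of Theorem \ref{main-thm:bound} explicitly establishes the first and last inequalities, which is what all later applications use.)

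The second and more substantial gap is that your proof stops at its self-declared crux: the union permanence $\asdim(\square_F G)\le\sup_i\asdim(\square_F G_i)$ for $G=\bigcup_i G_i$ and the base case $\asdim(\square_F T)=0$ for locally finite $T$ are announced as ``expected'' rather than proved, so the proposal is not a complete argument. You are right that Theorem \ref{main-thm:inductive} alone does not traverse the $L$-closure in the transfinite hierarchy --- the paper's one-line transfinite induction passes over exactly this step, and over the base class $\mathfrak{X}_1$, which contains non-finitely-generated abelian groups such as $\Z[1/p]$ --- but identifying the obstruction is not the same as surmounting it. Relatedly, your claim that $G/T$ admits a (sub)normal series whose factors are finite or infinite cyclic holds only when $G/T$ is virtually polycyclic; a general virtually solvable group of finite Hirsch length (e.g.\ a lamplighter group) has factors that are merely finite-rank abelian, so the displayed collapse to $\#\{i: F_i\cong\Z\}$ is unavailable and the union step is needed even after invoking Theorem \ref{thm:Hillman-Linnell}. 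For the record, both missing steps do admit a direct proof by an $R$-component analysis: for fixed $R>0$ and a right-invariant metric, every $R$-connected component of a finite quotient $G/H$ lies in the image of a coset of the finitely generated subgroup $\langle B_R(1)\rangle$; for locally finite $T$ this subgroup is finite, so the components are uniformly bounded and $\asdim(\square_F T)=0$; for $G=\bigcup_i G_i$ it is contained in some fixed $G_{i(R)}$, and at scales up to $R$ each component is isometric to a quotient of $G_{i(R)}$ by a finite-index subgroup, so the covers witnessing $\asdim(\square_F G_{i(R)})\le n$ at scale $R$ assemble into covers of $\square_F G$ with uniform constants. Until such an argument (or a substitute) is supplied, the inequality $\asdim(\square_F G)\le h(G)$ remains unproved in your write-up.
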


In particular, this upper bound gives a proof that elementary amenable groups have asymptotic dimension bounded above by their Hirsch length and we give a direct and short proof of this as Theorem \ref{thm:elem-induction} in the text. 

In the situation where $G$ is a semidirect product of a locally finite group by a virtually polycyclic group, we obtain:

\begin{thmx}\label{main-thm:calculate}
 For any residually finite $G \in \mathbf{LFin}\rtimes\mathbf{VPoly}$ and any semi-conjugacy-separating family $\sigma$ of finite-index subgroups of $G$, we have 
 \[
  \asdim(\square_\sigma G) = \asdim(G) = h(G).
 \]
\end{thmx}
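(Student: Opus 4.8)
The plan is to prove Theorem~\ref{main-thm:calculate} by sandwiching the three quantities between one another. The chain of inequalities
\[
 \asdim(G) \leq \asdim(\square_\sigma G) \leq \asdim(\square_\mathrm{F} G) \leq h(G)
\]
is already available: the middle inequalities come from Theorem~\ref{main-thm:bound}, while the leftmost inequality reflects the fact that $G$ embeds coarsely (with large injective radii) into its box space via the semi-conjugacy-separating hypothesis. Since $G \in \mathbf{LFin}\rtimes\mathbf{VPoly}$ is in particular residually finite and elementary amenable, Theorem~\ref{main-thm:bound} applies and gives $\asdim(\square_\sigma G) \leq h(G)$. Thus the entire statement collapses to establishing the single reverse inequality $h(G) \leq \asdim(G)$, after which all the intermediate terms are forced to coincide.

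To this end, first I would reduce to the polycyclic part. For $G = L \rtimes P$ with $L \in \mathbf{LFin}$ and $P \in \mathbf{VPoly}$, the locally finite normal subgroup $L$ contributes nothing to the Hirsch length, so $h(G) = h(P)$, and one should check that $\asdim(G) = \asdim(P)$ as well: the quotient map $G \to P$ is coarsely a fibration with locally finite (hence asymptotically $0$-dimensional, in fact bounded-on-each-fibre) kernel, so passing to the virtually polycyclic quotient does not change the asymptotic dimension. This uses the standard Hurewicz-type permanence for asymptotic dimension together with the fact that finite subgroups contribute $0$. Hence it suffices to prove $h(P) \leq \asdim(P)$ for $P$ virtually polycyclic.

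The key geometric input is that a virtually polycyclic group $P$ of Hirsch length $n = h(P)$ contains a coarsely embedded copy of $\mathbb{Z}^n$, or more precisely admits a controlled surjection onto, or a quasi-isometric relationship with, a space of asymptotic dimension exactly $n$. Concretely, I would argue that $\asdim(\Z^n) = n$ (a classical fact, since $\Z^n$ is quasi-isometric to $\R^n$) and then transport this lower bound to $P$. The cleanest route is monotonicity of asymptotic dimension under coarse embedding of subgroups: a polycyclic group of Hirsch length $n$ has a finite-index subgroup admitting a subnormal series with infinite cyclic factors, from which one extracts an isometrically (hence coarsely) embedded $\Z^n$, giving $\asdim(P) \geq \asdim(\Z^n) = n = h(P)$. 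Combined with the upper bound from Theorem~\ref{main-thm:bound}, this pins down every term in the chain to equal $h(G)$.

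The main obstacle I anticipate is the lower bound $h(P) \leq \asdim(P)$ itself, and specifically justifying that the full Hirsch length is realised coarsely inside $P$ rather than being lost under the non-commutativity of the group. Polycyclic groups need not contain $\Z^n$ as a subgroup when $n = h(P)$ (the extensions are generally genuinely non-split and the cyclic factors interact via nontrivial automorphisms), so one cannot naively pick generators of the cyclic quotients and hope they generate a free abelian subgroup. I expect to circumvent this by invoking that $\asdim(P)$ for virtually polycyclic $P$ is already known to equal $h(P)$ in the literature (it coincides with the cohomological/geometric dimension of the associated solvable Lie group, or equivalently the dimension of the contractible manifold on which $P$ acts properly cocompactly), so the lower bound is a citation rather than a construction; the verification that the locally finite extension does not alter either $\asdim$ or $h$ is then the only genuinely new bookkeeping, and it is routine.
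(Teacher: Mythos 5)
Your proposal follows essentially the same route as the paper: the upper-bound chain $\asdim(G)\leq\asdim(\square_\sigma G)\leq\asdim(\square_\mathrm{F} G)\leq h(G)$ is Theorem \ref{main-thm:bound}, and the lower bound $h(G)\leq\asdim(G)$ is obtained by reducing to the virtually polycyclic part and citing Dranishnikov--Smith (\cite[Theorem 3.5]{DS2006}, recorded in the paper as Theorem \ref{thm:DS}). Your instinct to treat the sharpness for virtually polycyclic groups as a citation rather than attempt an embedded-$\Z^n$ construction is exactly right, and your reduction step is precisely the paper's Proposition \ref{prop:lf-ext}.

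One step is mis-justified, however, and it happens to be the step that carries the whole lower bound. Writing $G = L \rtimes P$ with $L$ locally finite and $P$ virtually polycyclic, you assert $\asdim(G) = \asdim(P)$ on the grounds that $G \to P$ is coarsely a fibration with asymptotically zero-dimensional kernel. The Hurewicz-type permanence (Corollary \ref{cor:ext}) only yields $\asdim(G) \leq \asdim(P) + \asdim(L) = \asdim(P)$, which is the direction you do \emph{not} need (it is already subsumed in the upper-bound chain). The direction you do need, $\asdim(P) \leq \asdim(G)$, so that $h(G) = h(P) = \asdim(P) \leq \asdim(G)$, cannot come from any fibration argument: a quotient can have strictly larger asymptotic dimension than the total group (e.g.\ $F_2 \twoheadrightarrow \Z^2$ with $\asdim(F_2)=1<2=\asdim(\Z^2)$). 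It is exactly here that the semidirect product hypothesis is essential: because the extension splits, $P$ is a subgroup of $G$, and subgroup monotonicity gives $\asdim(P) \leq \asdim(G)$. This is the content (and the entire point) of Proposition \ref{prop:lf-ext}; for non-split extensions by locally finite groups the corresponding equality is not established, which is why the theorem is stated for $\mathbf{LFin}\rtimes\mathbf{VPoly}$ rather than all extensions of $\mathbf{VPoly}$ by $\mathbf{LFin}$. Since you have the splitting in hand, the fix is one line, but as written your justification invokes the wrong tool for the inequality that matters. A small further slip: your parenthetical that the kernel is ``bounded-on-each-fibre'' is false when $L$ is infinite --- the fibres are cosets of $L$, which are unbounded; only $\asdim(L)=0$ is true and is what gets used.
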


This lower bound relies on a result of Dranishnikov and Smith (\cite[Theorem 3.5]{DS2006}) as well as a general principle of extensions by locally finite groups that we prove in Proposition \ref{prop:lf-ext}.

Other than locally finite groups and virtually polycyclic groups, this theorem also applies to all residually finite groups in the class $\mathbf{Fin} \wr \mathbf{VPoly}$, which consists of all reduced wreath products $F\wr H$ where $F$ is finite and $H \in \mathbf{VPoly}$. In this case, residual finiteness is equivalent to $F$ being abelian (Theorem 3.2 of Gruenberg \cite{MR0087652}). Thus the previous theorem implies that the group $F\wr H$ and any box space $\square_\sigma (F\wr H)$ associated to a semi-conjugacy-separating family $\sigma$ have asymptotic dimension equal to the Hirsch length of $H$ (see Corollary \ref{cor:wreath-product}). 

Notice that as a consequence of Theorem \ref{main-thm:calculate}, we have shown that for any residually finite group in $\mathbf{LFin}\rtimes\mathbf{VPoly}$, the asymptotic dimensions of its box spaces do not depend on the choice of the semi-conjugacy-separating family of finite-index subgroups. We do not know whether this holds in general. 

\begin{ques}
 Is it true that for any discrete residually finite group $G$ and any two semi-conjugacy-separating families $\sigma$ and $\sigma'$ of finite-index subgroups, we have $\asdim(\square_\sigma G) = \asdim(\square_{\sigma'} G)$?
\end{ques}

Theorem \ref{main-thm:calculate} answers the above question positively for any residually finite $G \in \mathbf{LFin}\rtimes\mathbf{VPoly}$, while for any non-amenable $G$, the answer is also affirmative because 
\begin{equation*}
 \asdim(\square_\sigma G) = \infty = \asdim(\square_{\sigma'} G).
\end{equation*}
Hence in order to find counterexamples for the above question, one needs to examine amenable groups outside of $\mathbf{LFin}\rtimes\mathbf{VPoly}$. This leads us to the following two question:

\begin{ques}
 Does Theorem \ref{main-thm:calculate} hold for all $G \in \mathbf{Elem_{RF}}$?
\end{ques}

\begin{ques}
 Is there an example of a (necessarily amenable) residually finite group outside of $\mathbf{Elem_{RF}}$ such that at least one box space of it has finite asymptotic dimension?
\end{ques}

\section*{Acknowledgements}

The authors are grateful to Joachim Zacharias for bringing them together at the beginning of this project. Additionally, the second author would like to thank Arthur Bartels for helpful discussions.

\section{Preliminaries: Asymptotic dimension and elementary amenable groups}\label{sec:preliminaries-group-theory}
In this section we recall the definition of asymptotic dimension for a metric space and state the permanence results we require from the literature. We finish the section with a permanence result for split extensions by locally finite groups.

\setcounter{subsection}{-1}

\subsection{Notations}\label{subsection:notations}
For a metric space $(X,d)$, a point $x \in X$, a subset $Y\subseteq X$ and a positive number $R$, we let $B_R(x;d) := \{y\in X \ | \ d(x,y) \leq R \}$ denote the closed $R$-neighbourhood of $x$, and $P(Y;R;d) := \{y\in X \ | \ d(Y,y) \leq R \}$ denote the closed $R$-neighbourhood of $Y$ in $X$. We often drop the metric $d$ in these notations when there is no danger of confusion.

\subsection{Asymptotic dimension}
Let $X$ be a metric space. Recall that a covering $\mathcal{U}$ of $X$
\begin{enumerate}
 \item is \emph{$S$-bounded} if there exists a constant $S>0$ such that $diam(U) \leq S$ for every $U \in \mathcal{U}$,
 \item is \emph{uniformly bounded} if it is $S$-bounded for some $S$,
 \item has \emph{multiplicity} at most $n$ if the intersection of any $(n+1)$ members of $\mathcal{U}$ is empty, and 
 \item has \emph{Lebesgue number} at least $R$ if any non-empty subset of $X$ with diameter no more than $R$ is contained entirely in some $U\in \mathcal{U}$.
\end{enumerate}

\begin{defn}\label{defn:asymptotic-dimension}
A family of metric spaces $\mathcal{X}= \lbrace X_{\alpha} \rbrace_{\alpha}$ has \textit{asymptotic dimension} $\leq n$ \textit{uniformly} if for any $R>0$ there exists $S>0$ such that every $X_{\alpha}$ has an $S$-bounded covering $\mathcal{U^{\alpha}}$ of Lebesgue number at least $R$ and multiplicity at most $n+1$. We write $\asdim(\mathcal{X}) \leq n$.

We say $\mathcal{X}$ has asymptotic dimension $= n$ uniformly, and write $\asdim(\mathcal{X}) = n$, if $n$ is the least natural number $N$ such that $\asdim(\mathcal{X}) \leq N$. We write $\asdim(\mathcal{X}) = \infty$ if no such $N$ exists.
\end{defn}

We remark that there are many other definitions of asymptotic dimension that are equivalent to this one (c.f.~ \cite{BD2008} for a summary of results and equivalent definitions), and the definition generalises to abstract coarse spaces (\cite[Definition 9.4]{Roe2003}). Asymptotic dimension satisfies the full range of coarse permanence properties (see for example \cite{EG-permanence}). A straightforward result of this nature is that asymptotic dimension does not increase when passing to subspaces. Moreover, for an uncountable family of metric spaces, the asymptotic dimension is determined by its countable subfamilies:

\begin{lem}\label{lem:asdim-countable}
 Let $\mathcal{X} = \lbrace (X_{\alpha}, d_{\alpha} ) \rbrace_{\alpha \in I}$ be a family of metric spaces. Then
 \[
  \asdim (\mathcal{X}) = \sup_{\mathrm{countable}\ J \subset I} \asdim (\lbrace (X_{\alpha}, d_{\alpha} ) \rbrace_{\alpha \in J}) \;.
 \]
\end{lem}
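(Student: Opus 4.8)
The plan is to prove the two inequalities separately; the inequality ``$\geq$'' is a triviality, while the real content is the reverse estimate, and the whole point will be to reduce an uncountable obstruction to a countable one. Throughout I work directly from Definition \ref{defn:asymptotic-dimension}, paying close attention to the order of the quantifiers ``$\forall R\ \exists S\ \forall \alpha$''.

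For the easy direction, I observe that asymptotic dimension of a family is monotone under passage to subfamilies. Indeed, if $\asdim(\mathcal{X}) \leq n$ then for each $R$ there is a single $S$ that supplies, for \emph{every} $\alpha \in I$, an $S$-bounded cover of Lebesgue number at least $R$ and multiplicity at most $n+1$; restricting attention to the indices lying in a subset $J \subset I$ leaves this witness $S$ valid. Hence $\asdim(\lbrace X_\alpha \rbrace_{\alpha \in J}) \leq \asdim(\mathcal{X})$ for every $J$, and taking the supremum over countable $J$ gives one inequality at once.

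For the reverse inequality, write $n := \sup_{\mathrm{countable}\ J \subset I} \asdim(\lbrace X_\alpha \rbrace_{\alpha \in J})$, which I may assume finite. The key preliminary observation is that the defining condition may be tested using only \emph{integer} scale parameters: since an $S$-bounded cover is automatically $S'$-bounded whenever $S' \geq S$, and since Lebesgue number at least $\lceil R \rceil$ implies Lebesgue number at least $R$, one checks that $\asdim(\mathcal{X}) \leq n$ holds if and only if for every $R \in \N$ there is some $S \in \N$ so that each $X_\alpha$ admits an $S$-bounded cover of Lebesgue number at least $R$ and multiplicity at most $n+1$. I then argue by contradiction. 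Suppose $\asdim(\mathcal{X}) > n$; by the integer reformulation there is a fixed $R \in \N$ such that for every $S \in \N$ some index $\alpha_S \in I$ fails, meaning $X_{\alpha_S}$ admits no $S$-bounded cover of Lebesgue number at least $R$ and multiplicity at most $n+1$. Collecting these indices into the countable set $J := \lbrace \alpha_S : S \in \N \rbrace$, I claim $\asdim(\lbrace X_\alpha \rbrace_{\alpha \in J}) > n$. If not, applying the definition to this $J$ at the scale $R$ would produce a real $S' > 0$ furnishing the required covers uniformly over $J$; setting $S_0 := \lceil S' \rceil \in \N$, the cover provided for $X_{\alpha_{S_0}}$ (note $\alpha_{S_0} \in J$) is $S'$-bounded and hence $S_0$-bounded, directly contradicting the defining property of $\alpha_{S_0}$. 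Thus $J$ is a countable subfamily with $\asdim > n$, contradicting the definition of $n$, and therefore $\asdim(\mathcal{X}) \leq n$.

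The only delicate point, and the crux of the argument, is the reduction to integer scales: this is precisely what renders the obstruction countable, allowing me to select a single failing index $\alpha_S$ for each integer $S$ rather than having to cope with the full uncountable range of real scale parameters. Everything else is routine bookkeeping with the quantifiers of Definition \ref{defn:asymptotic-dimension}.
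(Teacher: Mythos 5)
Your proof is correct and takes essentially the same approach as the paper's: both reduce the scale parameter to integer values of $S$, extract for a fixed $R$ one failing index $\alpha_S$ per $S \in \Z^+$, and observe that the resulting countable subfamily $J = \lbrace \alpha_S \rbrace_{S \in \Z^+}$ already satisfies $\asdim > n$. The only difference is presentational: you argue by contradiction and spell out the rounding details (such as $S_0 = \lceil S' \rceil$) that the paper's one-line reformulation leaves implicit.
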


\begin{proof}
 ``$\geq$'' follows directly from definition, while ``$\leq$'' is a consequence of a reformulation of $\asdim(\mathcal{X} ) > n$ for any $n \in \N$: there exists $R > 0$ such that for any $S \in \Z^+$, there exists an index $\alpha_S \in I$ such that $X_{\alpha_S}$ does not possess any $S$-bounded covering of Lebesgue number at least $R$ and multiplicity at most $n+1$; whence for the countable subset $J=\lbrace \alpha_S \rbrace_{S \in \Z^+}$ we have $\asdim(\lbrace (X_{\alpha}, d_{\alpha} ) \rbrace_{\alpha \in J} ) > n$. 
\end{proof} 

For our purpose we also require from the literature the ``Hurewicz-type" theorems that deal with the asymptotic dimension of a ``fibred'' space. 

\begin{defn}
\begin{enumerate}
\item A family of maps $\lbrace f_{\alpha}: X_{\alpha} \to Y_{\alpha} \rbrace_{\alpha}$ between families of metric spaces is said to be \emph{Lipschitz} if there is $\lambda \geq 0$ such that each $f_{\alpha}$ is \emph{$\lambda$-Lipschitz}, i.e.~ $d(f_{\alpha}(x), f_{\alpha}(x')) \leq \lambda \: d(x, x')$ for any $x, x' \in X_{\alpha}$. 
\item A family of maps $\lbrace f_{\alpha}: X_{\alpha} \to Y_{\alpha} \rbrace_{\alpha}$ between families of metric spaces is said to be \emph{large scale uniform} if there exists a function $c:\mathbb{R}_{\geq 0} \rightarrow \mathbb{R}_{\geq 0}$ such that for all $\alpha$, $d_{X_{\alpha}}(x,x')\leq R$ implies $d_{Y_{\alpha}}(f_{\alpha}(x), f_{\alpha}(x'))\leq c(R)$.
\end{enumerate}
\end{defn}

The first Hurewicz-type theorem for asymptotic dimension we consider was proved by Dranishnikov and Bell \cite{MR2231870}.

\begin{thm}[{\cite[Theorem 1]{MR2231870}}]\label{prop:cf}
Let $f:X \rightarrow Y$ be a Lipschitz map from a geodesic metric space to a metric space. Suppose that, for every $R>0$, the family of coarse fibres $\lbrace f^{-1}(B_{R}(y)) \rbrace_{y \in Y}$ has asymptotic dimension $\leq n$ uniformly. Then
 \begin{equation*}
  \asdim(X) \leq \asdim(Y) + n.
 \end{equation*}\qed
\end{thm}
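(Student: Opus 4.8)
The plan is to verify the covering definition of asymptotic dimension directly, at each fixed scale. Write $m = \asdim(Y)$ and let $\lambda$ denote the Lipschitz constant of $f$. The goal is: given any target Lebesgue number $R>0$, produce a uniformly bounded cover of $X$ with Lebesgue number at least $R$ and multiplicity at most $m+n+1$. Everything is carried out at the single scale $R$; I would \emph{not} try to argue that the intermediate ``horizontal'' pieces have asymptotic dimension $\le n$ as genuine subspaces, since that is false in general—the control one gets is only at the chosen scale $R$.

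First I would pull back a good cover of $Y$. Using $\asdim(Y)\le m$ at a scale comparable to $\lambda R$, I obtain a cover $\mathcal V$ of $Y$ that is uniformly bounded, say by $S_Y$, has Lebesgue number $\ge \lambda R$, and decomposes into $m+1$ colour classes $\mathcal V^0,\dots,\mathcal V^m$, each $D$-disjoint for a separation constant $D$ to be fixed at the very end. Since $f$ is $\lambda$-Lipschitz, any $A\subseteq X$ with $\diam(A)\le R$ satisfies $\diam(f(A))\le \lambda R$, so $f(A)$ lies inside a single member of $\mathcal V$; hence the pulled-back ``tubes'' $f^{-1}(V)$ already have the property that every diameter-$R$ set lands inside one tube. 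Moreover, members of a fixed colour class being $D$-separated forces their tubes to be $(D/\lambda)$-separated in $X$.

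Next I would refine along the fibres. Each tube $f^{-1}(V)$ with $V\in\mathcal V^i$ sits inside a coarse fibre $f^{-1}(B_{S_Y}(y_V))$, and by hypothesis the family of all such coarse fibres has $\asdim\le n$ uniformly. Feeding the scale $R$ into this uniformity produces a bound $S'$ and, for each tube, a cover of mesh $\le S'$, Lebesgue number $\ge R$, splitting into $n+1$ colour classes. Intersecting these fibre-covers with the tubes keeps both the uniform bound and the Lebesgue number $\ge R$ (this uses only the Lipschitz estimate together with the two Lebesgue numbers), and I would fix $D$ so that $D/\lambda$ exceeds $S'$, ensuring that within a single $Y$-colour the fibre-covers of distinct tubes cannot interact, so the multiplicity there is at most $n+1$.

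The hard part is the multiplicity bookkeeping, and this is the genuine content of the theorem. The naive combination—taking the union over all $i$ of the refined tube-covers—gives, at a point $x$, up to $m+1$ tubes (one per $Y$-colour) times up to $n+1$ fibre-pieces per tube, i.e.\ only the \emph{multiplicative} bound $(m+1)(n+1)$, because distinct $Y$-colours genuinely overlap (the $Y$-cover has multiplicity $m+1$) and so their refined pieces cannot simply be merged into shared colours. Converting this into the \emph{additive} bound $m+n+1$ is exactly where the geodesic hypothesis on $X$ is essential: rather than refining each tube independently, one processes the $Y$-colours one at a time, at each stage \emph{saturating} (enlarging) the bounded pieces already built before applying the fibre-cover to the leftover region, with the geodesic structure guaranteeing that these successive enlargements interact controllably. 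I expect this saturation step—forcing the $Y$-direction and the fibre-direction to contribute additively, yielding final multiplicity $m+n+1$ in place of $(m+1)(n+1)$—to be the main obstacle; the two ingredient covers and the Lipschitz estimates are routine by comparison.
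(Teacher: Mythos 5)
This theorem is quoted by the paper from \cite{MR2231870} with no proof of its own, so the benchmark is Bell and Dranishnikov's original argument. Your setup --- pulling back an $(m+1)$-coloured, $D$-disjoint, uniformly bounded cover of $Y$ with Lebesgue number $\lambda R$, refining the tubes $f^{-1}(V)$ inside coarse fibres, and observing that the naive combination only yields multiplicity $(m+1)(n+1)$ --- is sound as far as it goes, and it correctly locates the content of the theorem in the passage from the multiplicative to the additive bound. But the proposal stops exactly there: the ``saturation'' step is offered as an expectation (``I expect this saturation step\dots to be the main obstacle''), with no mechanism for how the successive enlargements preserve mesh bounds, Lebesgue number and multiplicity across the $m+1$ stages. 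That step is not a technicality to be deferred; it is the theorem. In \cite{MR2231870} it is precisely the content of the union and saturated-union theorems for families of metric spaces, applied through an induction on $\asdim(Y)$, and --- importantly --- the colour classes of $Y$ are \emph{not} extracted from a single cover with one separation parameter: each successive colour is handled at a new, larger scale chosen \emph{after} the fibre-mesh produced by the previous stage is known. Your plan as written cannot be repaired within its own parameter order: you fix one cover $\mathcal{V}$ with a single constant $D$ ``to be fixed at the very end'', but $D$ determines the mesh $S_Y$ of $\mathcal{V}$, $S_Y$ determines which family of coarse fibres the uniformity hypothesis is applied to and hence determines $S'$, while you simultaneously require $D > \lambda S'$; there is no a priori fixed point in this loop, and escaping it is exactly why the genuine proof chooses scales iteratively.

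Two further corrections. First, your assertion that the geodesic hypothesis on $X$ is ``essential'' for the additive combination is contradicted by Theorem \ref{prop:cf2} (Brodskiy--Dydak--Levin--Mitra \cite{Brodskiy01062008}), stated immediately after this result in the paper: the same additive bound holds for merely large-scale uniform maps between arbitrary metric spaces. In Bell--Dranishnikov's argument geodesicity is a convenience of their Lipschitz-based machinery (it makes the Lipschitz hypothesis harmless and supplies the chain-connectivity used in their saturation estimates), not the source of additivity. Second, your opening caution --- that the ``horizontal'' pieces carry dimension control only at the chosen scale $R$ --- undersells your own hypothesis: for each \emph{fixed} radius $S_Y$, the family $\lbrace f^{-1}(B_{S_Y}(y)) \rbrace_{y \in Y}$ has asymptotic dimension $\leq n$ uniformly at \emph{all} scales, so the tubes subordinate to a cover of fixed mesh do form a family of genuine uniform asymptotic dimension $\leq n$. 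The obstruction is not there; it lies solely in combining the $m+1$ colours additively, which is the part your proposal does not prove.
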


This theorem was later generalised by Brodskiy, Dydak, Levin and Mitra \cite{Brodskiy01062008}. Combining Theorem 4.11 and Corollary 4.12 of \cite{Brodskiy01062008}, we obtain:

\begin{thm}\label{prop:cf2}
 Let $f: X \to Y$ be a large-scale uniform function between metric spaces, such that, for every $R>0$, the family of coarse fibres $\lbrace f^{-1}(B_{R}(y)) \rbrace_{y \in Y}$ has asymptotic dimension $\leq n$ uniformly.  Then 
 \begin{equation*}
  \asdim(X) \leq \asdim(Y) + n.
 \end{equation*}\qed
\end{thm}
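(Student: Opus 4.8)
The plan is to prove this directly by the coloured-cover method, generalizing the argument behind Theorem \ref{prop:cf}. I would use the standard reformulation of asymptotic dimension in terms of multi-coloured families: a family of metric spaces has $\asdim \leq k$ uniformly precisely when, for every $R > 0$, there is a uniform bound $S$ so that each space admits a cover by $S$-bounded sets splitting into $k+1$ subfamilies, each of which is $R$-disjoint (distinct members of one subfamily lie at distance $> R$). Writing $m = \asdim(Y)$ and letting $c$ denote a control function witnessing that $f$ is large-scale uniform, the goal at a prescribed target scale $R$ is to produce a uniformly bounded cover of $X$ that splits into $m + n + 1$ many $R$-disjoint subfamilies.

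On the base, I would first fix the bound $S$ for the fibre covers (see below) and then cover $Y$: choose a uniformly bounded cover $\mathcal V$ of $Y$ of multiplicity $\leq m+1$ whose subfamilies $\mathcal V_0, \dots, \mathcal V_m$ are $D$-disjoint, where $D$ is chosen large relative to $c$. The role of the large-scale-uniform hypothesis is exactly here: since $f(x) \in V$ and $f(x') \in V'$ with $\dist(V, V') > D$ forces $\dist(f(x), f(x')) > D$, the contrapositive of large-scale uniformity guarantees $\dist(x, x') > \rho$ whenever $D > c(\rho)$. Thus, within a single colour $i$, the preimages $\set{f^{-1}(V) : V \in \mathcal V_i}$ form a $\rho$-disjoint family in $X$, with $\rho$ as large as we please. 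This is precisely the mechanism that replaces the Lipschitz constant of Theorem \ref{prop:cf} by the control function $c$.

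On the fibres, each $V \in \mathcal V$ has diameter at most some uniform bound, so $f^{-1}(V)$ is contained in a coarse fibre $f^{-1}(B_{T}(y_V))$; the hypothesis that these coarse fibres have $\asdim \leq n$ uniformly then yields, for the target scale $R$, a uniform bound $S$ and covers of each such preimage that split into $n+1$ $R$-disjoint subfamilies. Because $X$ is not assumed geodesic, the interpolation step of Theorem \ref{prop:cf} — connecting nearby points by geodesics so that $f$ sends short paths to controlled paths — is unavailable; I would substitute for it a scale-dependent chaining argument, connecting points of $X$ at distance $\leq R$ and tracking their images under $c$, which is exactly the flexibility that the large-scale-uniform, non-geodesic formulation is built to accommodate.

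The main obstacle is the combination step, and in particular obtaining the sharp additive bound rather than a multiplicative one. Naively overlaying the $m+1$ base colours with the $n+1$ fibre colours produces $(m+1)(n+1)$ subfamilies and only the estimate $\asdim(X) \leq (m+1)(n+1) - 1$. Extracting the additive bound $\asdim(X) \leq m + n$ requires processing the base colours one at a time and, at each stage, enlarging (``swelling'') the sets already constructed so that incorporating the next layer of fibre covers raises the multiplicity by at most one; keeping the enlargements uniformly bounded while preserving the required Lebesgue number and disjointness is the delicate, genuinely non-formal part of the argument. This is exactly the bookkeeping carried out through the $n$-dimensional control functions of Brodskiy, Dydak, Levin and Mitra, and it is what their Theorem 4.11 and Corollary 4.12 supply; I would invoke that machinery to finish, since reassembling it by hand — especially uniformly over a disconnected space such as a box space, where the geodesification route used to deduce Theorem \ref{prop:cf} is not available — is where essentially all the work lies.
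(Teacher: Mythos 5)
Your proposal is correct and ultimately takes exactly the paper's route: the paper gives no independent proof of this theorem, importing it verbatim by combining Theorem 4.11 and Corollary 4.12 of Brodskiy, Dydak, Levin and Mitra \cite{Brodskiy01062008}, which is precisely the machinery you invoke for the delicate additive combination step. Your preliminary coloured-cover sketch (with the control function $c$ replacing the Lipschitz constant, and the correct contrapositive giving $\rho$-disjointness of preimages of $c(\rho)$-disjoint base colours) is a sound account of why the statement holds, but it carries no load beyond what the citation already supplies.
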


As a group quotient map is always Lipschitz for the quotient metric and the coarse fibre is a translate of a neighbourhood of the kernel (and thus coarsely equivalent to the kernel) we have the following natural corollary for group extensions, which was proved in \cite{MR2231870} when $G$ is finitely generated and \cite{DS2006,Brodskiy01062008} in the case of countable $G$.

\begin{cor}\label{cor:ext}
Let $1 \rightarrow N \rightarrow G \rightarrow K \rightarrow 1$ be a short exact sequence of groups. Then $\asdim G \leq \asdim N + \asdim K$. \qed
\end{cor}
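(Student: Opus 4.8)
The plan is to apply the Hurewicz-type Theorem~\ref{prop:cf2} to the quotient homomorphism $q\colon G \to K$. First I would equip $G$ and $N$ with proper left-invariant metrics $d_G$ and $d_N$ (so that they carry their canonical coarse structures) and push $d_G$ forward to the quotient metric $d_K(\bar g, \bar g') := \inf_{n,n' \in N} d_G(gn, g'n')$ on $K$. A short computation shows that $d_K$ is again a proper left-invariant metric, that it induces the canonical coarse structure on $K$, and that $q$ is $1$-Lipschitz, hence large-scale uniform. With these choices the quantities $\asdim Y = \asdim K$ and the fibre bound $\asdim \le \asdim N$ both refer to the intended group-theoretic invariants, and Theorem~\ref{prop:cf2} will yield $\asdim G \le \asdim K + \asdim N$ once the coarse fibres are understood.

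The heart of the argument is to show that, for every fixed $R>0$, the family of coarse fibres $\{q^{-1}(B_R(k))\}_{k \in K}$ has asymptotic dimension $\le \asdim N$ uniformly. I would first reduce to the fibre over the identity coset: since $d_K$ is left-invariant, left translation by any lift $g_k$ of $k$ is a $d_G$-isometry of $G$ carrying $q^{-1}(B_R(\bar e))$ onto $q^{-1}(B_R(k))$, so all these fibres are mutually isometric and it suffices to estimate $q^{-1}(B_R(\bar e))$. Next I would show $q^{-1}(B_R(\bar e)) \subseteq P(N; R; d_G)$. Indeed, if $d_K(\bar g, \bar e) \le R$ then there are $n, n' \in N$ with $d_G(gn, n') \le R$; writing $h := (n')^{-1} g n$, so that $d_G(h, e) = d_G(gn, n') \le R$, and using the normality of $N$ to absorb the conjugate $h n^{-1} h^{-1}$, one finds $g h^{-1} \in N$. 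Thus $g = n_0 h$ with $n_0 \in N$, whence $d_G(g, N) \le d_G(n_0 h, n_0) = d_G(h, e) \le R$ by left-invariance. Since conversely $N \subseteq q^{-1}(B_R(\bar e))$, this fibre lies within Hausdorff distance $R$ of $N$ and is therefore coarsely equivalent to the group $N$ (note that $d_G|_N$ is itself a proper left-invariant metric on $N$).

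Combining the two reductions, for each fixed $R$ every member of $\{q^{-1}(B_R(k))\}_{k}$ is isometric to a single space that is $R$-Hausdorff-close to $N$; hence the family has $\asdim \le \asdim(N, d_G|_N) = \asdim N$ uniformly, which is exactly the hypothesis of Theorem~\ref{prop:cf2}. Applying that theorem to $q$ then gives $\asdim G \le \asdim K + \asdim N$, as claimed. When $G$ is finitely generated one may instead invoke the geodesic version Theorem~\ref{prop:cf}, after passing from the word metric to the associated geodesic Cayley-graph metric.

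The step I expect to be the main obstacle is the coarse-fibre identification. Because the metrics are only left-invariant, right multiplication is not isometric, so controlling $q^{-1}(B_R(k))$ by a bounded neighbourhood of a single coset genuinely requires the normality of $N$ (to rewrite $g$ as $n_0 h$) together with careful bookkeeping of left- versus right-translations. The uniformity over $k$, by contrast, is essentially free once one observes that all the fibres are rendered isometric by the left-translation isometries.
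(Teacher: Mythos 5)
Your proof is correct and takes essentially the same approach as the paper, which likewise applies Theorem~\ref{prop:cf2} to the quotient map with the quotient metric, observing that each coarse fibre is a translate of a bounded neighbourhood of the kernel and hence coarsely equivalent to $N$ (with Theorem~\ref{prop:cf} covering the finitely generated case). The only deviation is your use of left-invariant rather than the paper's right-invariant metrics, which is immaterial here since the normality of $N$ makes the two conventions mirror images of one another.
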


It is immediate from definition that the asymptotic dimension of a locally finite group $G$ is $0$, because after equipping $G$ with a proper right-invariant metric, one has that, given any $R>0$, $G$ splits into a union of mutually $R$-separated cosets of a finite subgroup. Hence Corollary \ref{cor:ext} implies that if $\asdim K = n$ and $N$ is locally finite, then $\asdim G \leq n$. The following proposition enables us to retain a lower bound when the extension is split.

\begin{prop}\label{prop:lf-ext}
Let $1 \rightarrow N \rightarrow G \rightarrow K \rightarrow 1$ be a split short exact sequence where $N$ is locally finite. Then $\asdim G = \asdim K$.
\end{prop}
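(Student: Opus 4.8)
The plan is to prove the two inequalities $\asdim G \leq \asdim K$ and $\asdim K \leq \asdim G$ separately, observing that only the first uses that $N$ is locally finite and only the second uses that the sequence splits.

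First I would dispose of the upper bound, which is exactly the observation made in the paragraph preceding the statement. Since $N$ is locally finite we have $\asdim N = 0$, so Corollary \ref{cor:ext} applied to the extension $1 \to N \to G \to K \to 1$ gives
\[
 \asdim G \leq \asdim N + \asdim K = \asdim K.
\]
No splitting hypothesis is needed for this direction.

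The content of the proposition is therefore the reverse inequality. Here I would use the splitting: let $s \colon K \to G$ be a homomorphic section of the quotient map, so that $s$ is an injective homomorphism and $s(K)$ is a subgroup of $G$ isomorphic to $K$. Equipping $G$ with a proper right-invariant metric $d$, the restriction $d|_{s(K)}$ is again proper (every ball in $s(K)$ sits inside a ball of $G$, hence is finite) and right-invariant, so it is a proper right-invariant metric on $s(K)$, and the inclusion $s(K) \hookrightarrow G$ is an isometric, in particular coarse, embedding. Since asymptotic dimension does not increase when passing to subspaces, $\asdim (s(K), d|_{s(K)}) \leq \asdim (G, d)$. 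Because any two proper right-invariant metrics on a (countable) group are coarsely equivalent, $\asdim (s(K), d|_{s(K)}) = \asdim K$, and we conclude $\asdim K \leq \asdim G$. The two directions together yield $\asdim G = \asdim K$.

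I do not expect a serious obstacle, since both inequalities reduce to permanence properties already available. The one point that must be handled with care is that the lower bound has to be argued through the \emph{subgroup} $s(K)$ rather than through $K$ regarded as a quotient: asymptotic dimension can strictly increase under quotient maps, so it is precisely the existence of the splitting that allows us to realize $K$ as a coarse subspace of $G$ and thereby invoke monotonicity under subspaces. This also explains why the hypothesis that the extension splits is essential for the lower bound, whereas local finiteness of $N$ is what forces the matching upper bound.
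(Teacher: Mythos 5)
Your proposal is correct and follows essentially the same route as the paper: the upper bound $\asdim G \leq \asdim K$ comes from Corollary \ref{cor:ext} together with $\asdim N = 0$ for locally finite $N$, and the lower bound comes from the splitting realizing $K$ as a subgroup of $G$ and the monotonicity of asymptotic dimension under subspaces. The only difference is that you spell out the metric bookkeeping (properness and right-invariance of the restricted metric, coarse invariance of the choice of metric) that the paper leaves implicit, which is a harmless elaboration.
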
 
\begin{proof}
From Corollary \ref{cor:ext} we know that $\asdim G \leq \asdim K$. However, because the extension is split, we know that $K$ is a subgroup of $G$, and whence $\asdim K \leq \asdim G$.
\end{proof}

\subsection{Solvable groups}

In this section we outline the necessary group theoretic definitions and results concerning the classes of polycyclic and solvable groups and the notion of Hirsch length for these groups.

\begin{defn}
A group $G$ is \textit{solvable} if there is a chain of subgroups $ 1 = G_{0} \vartriangleleft G_{1} \vartriangleleft ... \vartriangleleft G_{n-1} \vartriangleleft G_{n} = G$, such that the factor groups $G_{i}/G_{i-1}$ are abelian. 
\end{defn}

Recall that given a class of groups $\mathcal{C}$, a group $G$ is said to be virtually $\mathcal{C}$ (denoted $G \in \mathbf{V}\mathcal{C}$) if it contains a finite-index subgroup in $\mathcal{C}$. Let $\mathbf{VSolv}$ be the class of virtually solvable groups, which is closed under extensions and subgroups. It clearly contains the classes $\mathbf{VAb} \subset \mathbf{VNilp}$ of virtually abelian, and virtually nilpotent groups respectively. We note that the class of virtually polycyclic groups $\mathbf{VPoly}$ is contained in the class of virtually solvable groups, and contains the finitely generated groups in $\mathbf{VNilp}$.

It is known that the asymptotic dimension of solvable groups is controlled by the Hirsch length of the group:

\begin{defn}
Let $G$ be a solvable group. Then the \textit{Hirsch length of $G$} is defined as $h(G) := \sum_{i} dim_{\mathbb{Q}}(G_{i}/G_{i-1}\otimes \mathbb{Q})$.
\end{defn}

A nice observation, due to Dranishnikov and Smith \cite[Theorem 3.2]{DS2006}, is that for an abelian group $A$ one has $\asdim(A) = h(A) = dim_{\mathbb{Q}}(A\otimes \mathbb{Q})$. Coupling this with the definition of the Hirsch length, we get an alternative formula for a solvable group:
\begin{equation*}
h(G) = \sum_{i} \asdim (G_{i}/G_{i-1}).
\end{equation*}

In \cite{DS2006}, Dranishnikov and Smith also were able to conclude the following:

\begin{thm}\label{thm:DS}
Let $G$ be a virtually solvable group, then $\asdim(G) \leq h(G)$. If in addition $G$ is virtually polycyclic then this bound is sharp, that is $\asdim(G) = h(G)$.\qed
\end{thm}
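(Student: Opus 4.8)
The plan is to treat the two inequalities separately, obtaining the upper bound $\asdim(G) \leq h(G)$ by an induction along a solvable series, and the sharpness in the polycyclic case by realising the group geometrically to produce a matching lower bound.

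For the upper bound I would first reduce to the case of a genuinely solvable group: since a virtually solvable $G$ contains a finite-index solvable subgroup $H$, and since finite-index subgroups are coarsely equivalent while the Hirsch length is a commensurability invariant, it suffices to bound $\asdim(H) = \asdim(G)$ by $h(H) = h(G)$. Given a solvable $H$ with series $1 = H_0 \vartriangleleft H_1 \vartriangleleft \cdots \vartriangleleft H_m = H$ having abelian quotients, I would induct on $m$ using the extension estimate of Corollary \ref{cor:ext} applied to $1 \to H_{i-1} \to H_i \to H_i/H_{i-1} \to 1$, which gives $\asdim(H_i) \leq \asdim(H_{i-1}) + \asdim(H_i/H_{i-1})$. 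Feeding in the equality $\asdim(A) = h(A)$ for abelian groups (the Dranishnikov--Smith computation quoted above) at each step yields $\asdim(H) \leq \sum_i \asdim(H_i/H_{i-1}) = \sum_i h(H_i/H_{i-1}) = h(H)$, as desired. The one point to watch is that the abelian quotients need not be finitely generated, so it is essential that the base case is available for \emph{arbitrary} abelian groups.

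For the sharpness when $G$ is virtually polycyclic, the upper bound is already in hand, so it remains to prove $\asdim(G) \geq h(G)$. I would again pass to a finite-index subgroup, this time using that every virtually polycyclic group contains a finite-index torsion-free polycyclic subgroup; as before this changes neither $\asdim$ nor $h$. A torsion-free polycyclic group of Hirsch length $n$ admits, by Mostow's theorem (after a possible further finite-index passage), a realisation as a cocompact lattice in a simply connected solvable Lie group $S$ of dimension $n$. Since $S$ is diffeomorphic to $\R^n$ and carries a homogeneous, hence uniformly contractible, left-invariant metric, the cocompactness of the action gives $\asdim(G) = \asdim(S)$, and Gromov's macroscopic-dimension lower bound for uniformly contractible $n$-manifolds forces $\asdim(S) \geq n = h(G)$. (Equivalently one may phrase this via the fact that such a $G$ is a $PD_n$ group.)

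The main obstacle is precisely this lower bound: the inductive machinery available here -- the extension estimate together with the abelian computation -- is intrinsically one-sided and produces only upper bounds, so the reverse inequality must import genuinely new geometric input. Concretely, one must certify that the coarse geometry of $G$ cannot be compressed below dimension $n$; equivalently, one must rule out, at all scales, uniformly bounded covers of multiplicity at most $n$ with arbitrarily large Lebesgue number. Realising $G$ as a lattice in a solvable Lie group and invoking the uniform contractibility of the Euclidean model space is the cleanest route I know to such a bound, and it is this step -- rather than any of the routine inductions -- where the substance of the theorem lies.
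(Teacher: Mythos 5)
The paper itself offers no proof of this theorem: it is quoted from Dranishnikov and Smith \cite{DS2006} (their Theorems 3.2 and 3.5), which is why it is stated with a qed-symbol and no argument. Your reconstruction is correct and runs along essentially the same lines as the cited source: the upper bound by induction along a solvable series using the extension inequality of Corollary \ref{cor:ext} together with the abelian base case $\asdim(A)=h(A)=\dim_{\Q}(A\otimes\Q)$ (valid, as you rightly stress, for arbitrary, not necessarily finitely generated, abelian groups), and the lower bound by geometric realisation of a torsion-free finite-index subgroup. Two points deserve care. First, when you pass to a finite-index solvable (or torsion-free polycyclic) subgroup $H$ and claim $h(H)=h(G)$, note that the additivity $h(G)=h(N)+h(G/N)$ is available only for normal $N$, so you should replace $H$ by its normal core (still of finite index, and solvable) before invoking it; this is routine but should be said. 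Second, the crux inequality $\asdim(S)\geq n$ for the uniformly contractible model space is, exactly as you identify, genuinely external input: the statement one needs is that a uniformly contractible $n$-manifold of \emph{bounded geometry} has asymptotic dimension at least $n$ (bounded geometry holds here since $S$ carries a left-invariant metric with transitive isometry group), and the standard proof goes through coarse cohomology, namely $HX^{*}(S)\cong H^{*}_{c}(S)$ for such spaces while $HX^{k}$ vanishes above the asymptotic dimension, rather than through Gromov's sketch for $\R^{n}$ alone. Dranishnikov and Smith themselves exploit that a torsion-free polycyclic group is poly-$\Z$ and hence the fundamental group of a closed aspherical $n$-manifold obtained from iterated mapping tori, then apply the same uniform-contractibility bound to the universal cover; your route via Mostow's theorem, realising the subgroup as a cocompact lattice in a simply connected solvable Lie group diffeomorphic to $\R^{n}$, is an equivalent variant that trades the mapping-torus construction for a deeper Lie-theoretic citation. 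With these caveats made explicit, your proposal is a sound proof of the statement.
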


By Proposition \ref{prop:lf-ext} we also obtain a sharp bound for any $G$ that is a semidirect product of a locally finite group by a virtually polycyclic group. This includes all the wreath products $F \wr H$, where $F$ is finite and $H$ is virtually polycyclic. 

We would like to remark that Hirsch length is often called \textit{torsion free rank} in the solvable group literature, and there are other natural definitions one could make to define the rank of a solvable group - such as the $p$-rank for a fixed prime $p$, which is defined for an abelian group $A$ to be the size of maximal independent set of elements that each have order $p$ and denoted $r_{p}(A)$. This rank is extended naturally to solvable groups by summing this rank over the abelian factor groups. A solvable group has \textit{finite rank} when:
\begin{equation*}
r(G) = h(G) + \sum_{p} r_{p}(G) < \infty.
\end{equation*}
Observe that torsion elements in an abelian group generate a locally finite subgroup, which as we remarked has asymptotic dimension $0$. From this we can see that the $p$-rank does not influence the asymptotic dimension.

On the other hand, being finite rank does enable a characterisation of residual finiteness in the class of solvable groups, as by Robinson \cite[Theorem A]{MR0228586}, a solvable group of finite rank is residually finite if and only if it has no divisible subgroups (such as $\mathbb{Q}$ or the quasi-cyclic $p$ group $\mathbb{Z}[\frac{1}{p}]/\mathbb{Z}$).

\subsection{Elementary amenable groups}\label{sect:elem}

In this section we describe the appropriate notion of Hirsch length for elementary amenable groups and give a generalisation of the Dranishnikov-Smith theorem to the class of elementary amenable groups.

\begin{defn}
A group $G$ is \textit{elementary amenable} if it is obtained from abelian groups and finite groups using the group constructions of taking extensions and taking direct limits. This can be described more systematically using transfinite induction (c.f.~ \cite{MR1143191}), but first we need some notions of class operations for groups.
\end{defn}

Let $\mathcal{C}$, $\mathcal{D}$ be classes of groups, and let $L\mathcal{C}$ denote the class of \textit{locally} $C$ groups, i.e the class of groups $G$ that have all finitely subgroups in $\mathcal{C}$; and let $\mathcal{C}\mathcal{D}$ be the class of $\mathcal{C}-$by$-\mathcal{D}$ groups, that is the class of groups $G$ such that there exists a normal subgroup $N \triangleleft G$ such that $N\in \mathcal{C}$ and $G/N \in \mathcal{D}$. 

To define the class of elementary amenable groups this way, begin with $\mathfrak{X}_{0}=\lbrace 1 \rbrace$ and $\mathfrak{X}_{1}$ the class of virtually abelian groups. Assuming the definition $\mathfrak{X}_{\alpha}$ we obtain $\mathfrak{X}_{\alpha+1}$ by the formula: $\mathfrak{X}_{\alpha+1}=(L\mathfrak{X}_{\alpha})\mathfrak{X}_{1}$. To complete the definition (by transfinite induction) define, for any limit ordinal $\beta$, $\mathfrak{X}_{\beta}=\cup_{\alpha < \beta} \mathfrak{X}_{\alpha}$ and finally $\mathbf{Elem}=\cup_{\alpha}\mathfrak{X}_{\alpha}$.

We can naturally use this framework to extend Hirsch length. Define the function $h$ first on $\mathfrak{X}_{1}$ by $h(A)=dim_{Q}(A\otimes \mathbb{Q})$ and inductively by supposing that it is defined for $G \in \mathfrak{X}_{\alpha}$ and then for $H \in L\mathfrak{X}_{\alpha}$ by
\begin{equation*}
h(H) = \sup \lbrace h(G) \mid G \in \mathfrak{X}_{\alpha}\rbrace.
\end{equation*}
When $G \in \mathfrak{X}_{\alpha+1}$ it has a normal subgroup $N$ in $L\mathfrak{X}_{\alpha}$ with $G/N$ in $\mathfrak{X}_{1}$. In this case, define $h(G)=h(N)+h(G/N)$. It was shown in  \cite{MR1143191} using transfinite induction that $h(G)$ is well defined and satisfies:
\begin{enumerate}
\item for $H\leq G$, we have $h(H)\leq h(G)$;
\item for $N \triangleleft G$, we have $h(G)=h(N)+h(G/N)$;
\item for $G$, we have $h(G) = \sup \lbrace h(F) \mid F\leq G$ finitely generated $\rbrace$.
\end{enumerate}

We can now extend the asymptotic dimension bound from solvable groups to elementary amenable groups. 

\begin{thm}\label{thm:elem-induction}
Let $G$ be an elementary amenable group. Then $\asdim(G)\leq h(G)$.
\end{thm}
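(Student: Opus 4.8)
The plan is to prove this by transfinite induction on the ordinal $\alpha$ for which $G \in \mathfrak{X}_{\alpha}$, mirroring exactly the transfinite construction of the class $\mathbf{Elem}$ and the inductive definition of the Hirsch length $h$. The base case is $G \in \mathfrak{X}_1$, i.e.\ $G$ is virtually abelian; here $\asdim(G) \leq h(G)$ follows immediately from Theorem \ref{thm:DS} (indeed $\mathbf{VAb} \subseteq \mathbf{VSolv}$), since $h$ restricted to $\mathfrak{X}_1$ is precisely $\dim_{\Q}(A\otimes\Q)$. The inductive step has two parts, corresponding to the two class operations used to pass from $\mathfrak{X}_\alpha$ to $\mathfrak{X}_{\alpha+1} = (L\mathfrak{X}_\alpha)\mathfrak{X}_1$, and to the union taken at limit ordinals.

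First I would handle the locally-$\mathfrak{X}_\alpha$ step: suppose $\asdim(G) \leq h(G)$ for all $G \in \mathfrak{X}_\alpha$, and let $H \in L\mathfrak{X}_\alpha$. Every finitely generated subgroup $F \leq H$ lies in $\mathfrak{X}_\alpha$, so by the inductive hypothesis $\asdim(F) \leq h(F) \leq h(H)$, using property (1) of Hirsch length. Since asymptotic dimension is determined by finitely generated subgroups (a standard permanence property: $\asdim(H) = \sup\set{\asdim(F) \mid F \leq H \text{ finitely generated}}$), we conclude $\asdim(H) \leq h(H)$. Next, for the extension step, take $G \in \mathfrak{X}_{\alpha+1}$ with a normal subgroup $N \in L\mathfrak{X}_\alpha$ and $G/N \in \mathfrak{X}_1$. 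By the previous paragraph $\asdim(N) \leq h(N)$, and by the base case $\asdim(G/N) \leq h(G/N)$. Corollary \ref{cor:ext} gives $\asdim(G) \leq \asdim(N) + \asdim(G/N) \leq h(N) + h(G/N) = h(G)$, where the final equality is the definition of $h$ on extensions (equivalently property (2)).

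Finally, at a limit ordinal $\beta$ we have $\mathfrak{X}_\beta = \bigcup_{\alpha < \beta} \mathfrak{X}_\alpha$, so any $G \in \mathfrak{X}_\beta$ already lies in some $\mathfrak{X}_\alpha$ with $\alpha < \beta$, and the bound holds by the induction hypothesis; no new argument is needed. Since $\mathbf{Elem} = \bigcup_\alpha \mathfrak{X}_\alpha$, every elementary amenable group is covered.

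I expect the only genuinely substantive ingredient to be the permanence fact that asymptotic dimension is controlled by finitely generated subgroups, used in the $L\mathfrak{X}_\alpha$ step; this is where finiteness (rather than mere countability) of the subgroups matters, and it is what allows the supremum defining $h$ on locally-$\mathfrak{X}_\alpha$ groups to transfer to a supremum of asymptotic dimensions. The extension step is routine given Corollary \ref{cor:ext}, and the alignment of the two inductions is clean precisely because the recursive definition of $h$ was designed to match the class operations; the main care is simply to ensure the bookkeeping of the transfinite induction matches the definition of $h$ at each of the three cases.
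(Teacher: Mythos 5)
Your proposal is correct and follows essentially the same route as the paper: transfinite induction along the classes $\mathfrak{X}_\alpha$, with the base case $\mathfrak{X}_1$ supplied by Theorem \ref{thm:DS} and the extension step obtained by matching Corollary \ref{cor:ext} against $h(G)=h(N)+h(G/N)$. The only difference is that you make explicit the $L\mathfrak{X}_\alpha$ step via the permanence fact $\asdim(H)=\sup\set{\asdim(F) \mid F\leq H \ \text{finitely generated}}$ (due to Dranishnikov--Smith), which the paper's terse proof leaves implicit --- a worthwhile addition rather than a deviation.
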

\begin{proof}
This follows from Corollary \ref{cor:ext} and induction on the Hirsch length, but we include the details because they illustrate the scheme of the proof that we use later in the context of box spaces.

Fix a proper, right invariant metric on $G$. An immediate consequence of \cite[Theorem 3.2]{DS2006} is that the asymptotic dimension of a virtually abelian group $A$ (i.e.~ a group in $\mathfrak{X}_{1}$) is equal to its Hirsch length. From the definition of Hirsch length for $\mathbf{Elem}$ we have $h(G) = h(N)+h(G/N)$ for any $N\triangleleft G$ and $G \in \mathbf{Elem}$, whereas we have $\asdim G \leq \asdim N + \asdim G/N$. The result now follows using (transfinite) induction based on which $\mathfrak{X}_{\alpha}$ the group $G$ belongs.
\end{proof}

We would also like to remark that a different approach to this result goes through a result of Hillman and Linnell \cite{MR1143191}, where they prove:

\begin{thm}\label{thm:Hillman-Linnell}
Let $G$ be a elementary amenable group of finite Hirsch length and let $\Lambda(G)$ be the maximal normal locally finite subgroup of $G$. Then $G/\Lambda(G)$ is virtually solvable with Hirsch length equal to the Hirsch length of $G$.\qed
\end{thm}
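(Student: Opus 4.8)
The statement to prove is Theorem~\ref{thm:Hillman-Linnell}: for an elementary amenable group $G$ of finite Hirsch length with maximal normal locally finite subgroup $\Lambda(G)$, the quotient $G/\Lambda(G)$ is virtually solvable with $h(G/\Lambda(G)) = h(G)$.

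\medskip

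The plan is to proceed by transfinite induction on the ordinal $\alpha$ for which $G \in \mathfrak{X}_{\alpha}$, using the structural characterisation of elementary amenable groups built into the definition. First I would establish the base case: for $G \in \mathfrak{X}_1$, i.e.\ $G$ virtually abelian, the torsion subgroup of a finite-index abelian subgroup $A$ is finite (since $A$ is finitely generated when $h(G) < \infty$, as $h(A) = \dim_{\Q}(A \otimes \Q)$ finite forces the torsion-free rank to be finite and one checks the torsion part is finite under the finite-rank/residual hypotheses at play), so $\Lambda(G)$ is finite and $G/\Lambda(G)$ is virtually abelian, hence virtually solvable, with equal Hirsch length because $h$ is additive across the finite (hence Hirsch-length-zero) kernel. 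The successor step handles $G \in \mathfrak{X}_{\alpha+1} = (L\mathfrak{X}_\alpha)\mathfrak{X}_1$: here $G$ has a normal subgroup $N \in L\mathfrak{X}_\alpha$ with $G/N \in \mathfrak{X}_1$. The key is to analyse $\Lambda(N)$ and relate it to $\Lambda(G)$. One shows $\Lambda(G) \cap N = \Lambda(N)$ and that $\Lambda(N)$ is characteristic in $N$, hence normal in $G$; then I would pass to $G/\Lambda(N)$ and argue that its maximal normal locally finite subgroup is $\Lambda(G)/\Lambda(N)$, reducing to a situation where the normal subgroup has no locally finite obstruction.

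\medskip

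The heart of the argument, and the main obstacle, is controlling how local properties interact with the passage to quotients: $N \in L\mathfrak{X}_\alpha$ means every finitely generated subgroup lies in $\mathfrak{X}_\alpha$, and I must show that after quotienting by $\Lambda(N)$ the resulting group $N/\Lambda(N)$ is actually virtually solvable (not merely locally so) of the same Hirsch length. This is where finiteness of the Hirsch length is essential: I would invoke the inductive hypothesis applied to the finitely generated subgroups of $N$, combined with property~(3) of Hirsch length, namely $h(N) = \sup\{h(F) \mid F \leq N \text{ finitely generated}\}$, to see that the torsion-free rank is realised on some finitely generated subgroup and that $N/\Lambda(N)$ embeds into a product of solvable quotients in a controlled way. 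The delicate point is that a \emph{locally} (virtually solvable) group of finite Hirsch length need not be virtually solvable in general, so one must use the finite Hirsch length hypothesis to bound the derived length or to find a finite-index solvable subgroup uniformly; this is precisely the content that makes Theorem~\ref{thm:Hillman-Linnell} a genuine theorem rather than a formality.

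\medskip

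Finally, the limit ordinal case $\beta$ with $\mathfrak{X}_\beta = \bigcup_{\alpha < \beta}\mathfrak{X}_\alpha$ is immediate, since any $G \in \mathfrak{X}_\beta$ already lies in some $\mathfrak{X}_\alpha$ with $\alpha < \beta$, so no new argument is needed. Once virtual solvability of $G/\Lambda(G)$ is in hand, the Hirsch length equality $h(G/\Lambda(G)) = h(G)$ follows from additivity (property~(2)) together with $h(\Lambda(G)) = 0$, the latter because $\Lambda(G)$ is locally finite and every finite group has Hirsch length zero, so property~(3) forces $h(\Lambda(G)) = 0$. The only real work is the successor step; the base and limit cases and the final Hirsch-length bookkeeping are routine given the permanence properties already recorded in this section.
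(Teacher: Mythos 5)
You have not proved the theorem, and neither does the paper: the statement is quoted verbatim from Hillman and Linnell \cite{MR1143191} (hence the \qed with no argument), and it is a genuinely hard result from the literature, not something the paper's machinery rederives. Your proposal is an induction \emph{scheme}, not a proof: the successor step, which you yourself call ``the heart of the argument,'' is exactly the content of the Hillman--Linnell theorem, and you leave it unresolved. Concretely, for $N \in L\mathfrak{X}_\alpha$ of finite Hirsch length, the inductive hypothesis gives, for each finitely generated $F \leq N$, that $F/\Lambda(F)$ is virtually solvable with $h(F/\Lambda(F)) = h(F) \leq h(N)$; but virtual solvability does not pass to a direct union unless one has \emph{uniform} bounds on the derived length of a solvable subgroup and on its index, and finite Hirsch length alone furnishes no such bounds by bookkeeping (finite groups all have Hirsch length $0$ with unbounded order and derived length). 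Producing uniform bounds in terms of $h$ is where the actual proof brings in nontrivial external inputs, such as Mal'cev/Zassenhaus/Minkowski-type structure theorems for solvable and finite subgroups of $GL_n(\Q)$ applied to actions on torsion-free abelian sections of finite rank. Your phrase that $N/\Lambda(N)$ ``embeds into a product of solvable quotients in a controlled way'' is a placeholder for precisely this missing argument, and your own remark that this delicate point ``is precisely the content that makes the theorem a genuine theorem rather than a formality'' is an admission that the gap remains open at the end of the proposal.

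Your base case also contains a false claim: finite Hirsch length does not force a virtually abelian group to be finitely generated, nor its torsion to be finite. Indeed $\Q$ has $h=1$ and is not finitely generated, and $\bigoplus_{i \in \N} \Z/2\Z$ has $h=0$ with infinite torsion, so $\Lambda(G)$ can be infinite already for abelian $G$. This error is repairable, since the base case conclusion needs no finiteness of $\Lambda(G)$: any quotient of a virtually abelian group is virtually abelian, and $h(\Lambda(G))=0$ together with additivity of $h$ gives the Hirsch length equality. The reductions you do carry out are correct but routine, exactly as you say: $\Lambda(N) = \Lambda(G) \cap N$ is characteristic in $N$ and hence normal in $G$; since locally finite groups are closed under extensions (Schmidt's theorem), the locally finite radical of $G/\Lambda(N)$ is $\Lambda(G)/\Lambda(N)$; the limit ordinal case is vacuous; and $h(\Lambda(G))=0$ follows from property (3). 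None of that touches the core difficulty, and for the paper's purposes the right move is the one the paper makes: cite Hillman and Linnell rather than attempt to reprove them.
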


This result illustrates that the finite Hirsch length elementary amenable case is not far from the virtually solvable case.

\section{Box spaces}\label{sec:preliminaries-coarse-geometry}

In this section we make precise the notion of the box space of a (residually finite) group. The most concise definition uses the language of entourage sets for coarse structures introduced by Roe \cite[Section 2.1]{Roe2003}.

\begin{defn}\label{defn:box-space}
 Let $G$ be a countable discrete group and $\sigma = (G_{\alpha})_{\alpha}$ a family\footnote{Here we allow the same subgroup appear multiple times. Thus $\sigma$ may be considered as a \emph{multiset}.} of finite-index subgroups of $G$. Then 
 \begin{enumerate}
  \item The \emph{box space} of $G$ with respect to $\sigma$, denoted by $\square_\sigma G$, is the coarse space whose underlying set is $\bigsqcup_{G_{\alpha} \in \sigma} G / G_{\alpha} $ and whose coarse structure is the minimal connected $G$-invariant coarse structure containing the $G$-diagonals $\Delta_{g}:= \lbrace (x,gx) \mid x \in \bigsqcup_{G_{\alpha} \in \sigma} G / G_{\alpha} \rbrace$ for all $g \in G$. 
  \item The \emph{total box space} of $G$ with respect to $\sigma$, denoted by $\widetilde{\square}_\sigma G$, is the coarse space whose underlying set is $\bigsqcup_{G_{\alpha} \in \sigma} G / G_{\alpha} $ and whose coarse structure is $G$-invariant coarse structure generated by the $G$-diagonals $\Delta_{g}$ for all $g \in G$ (in particular, as a coarse space it is the total space of the family of all finite Schreier quotients of $G$ following the notation and ideas of \cite{EG-permanence}).
 \end{enumerate}
\end{defn}

\begin{rmk}\label{rmk:metrising-box-space}
 It is often easier to work with metric spaces than abstract coarse spaces. Luckily, box spaces have good metrisability.
 \begin{enumerate}
  \item $\widetilde{\square}_\sigma G$ can always be metrised. In fact, if we fix a proper right-invariant metric on $G$, then $\widetilde{\square}_\sigma G$ can be metrised by the metric that is the quotient metric within each finite quotient $G/G_{\alpha}$, and $\infty$ between different finite quotients.
  \item On the other hand, $\square_\sigma G$ can be metrised if and only if $\sigma$ is countable (c.f.~ \cite[Theorem 2.55]{Roe2003}). When $\sigma$ is finite, $\square_\sigma G$ may be realized as a bounded metric space. When $\sigma$ is countably infinite, we may construct the metric as follows: we first index $\sigma$ by the natural numbers, i.e.~ $\sigma = (G_n)_{n \in \N}$, and then take an increasing sequence $\lambda = (\lambda_n)_{n \in \N}$ of positive numbers approaching $\infty$. As before, we also fix a proper right-invariant metric $\dist_G$ on $G$, which determines quotient metrics $\dist_{G/G_n}$ on $G/G_n$. Now for any $x, x' \in \square_\sigma G$, supposing $x \in G/G_n$ and $x' \in G/G_{n'}$ with $n \leq n'$, we define the metric  $\dist_{\sigma, \lambda, \dist_G}$ on $\square_\sigma G$ by
  \[
   \dist_{\sigma, \lambda, \dist_G} (x,x') := \begin{cases}
                   \dist_{G/G_n} (x,x') & \mathrm{if}\ n=n' \; ;\\
                   \sum_{k=n+1}^{n'} \lambda_k & \mathrm{if}\ n<n' \; .
                  \end{cases}
  \]
 Naturally there is a lot of flexibility in how we metrise $\square_\sigma G$, but the crucial property to achieve is that $d(G/G_{m},G/G_{n})$ tends to infinity as $m+n$ does. This behaviour is a consequence of the coarse connectedness condition.
 \end{enumerate}
\end{rmk}

Sometimes it is more convenient to talk about families of coarse metric spaces. For this purpose, we define:

\begin{defn}
 Let $G$ be a countable discrete group and $\sigma = (G_{\alpha})_{\alpha}$ a family of finite-index subgroups of $G$. Fix a proper right-invariant metric $\dist_G$ on $G$, and induce, for each $G_{\alpha} \in \sigma$, the quotient metric $\dist_{G/G_{\alpha}}$. Then the \emph{box family} of $G$ with respect to $\sigma$ and $\dist_G$, denoted by $\widehat{\square}_{\sigma, \dist_G} G$, is the family of metric spaces $\lbrace ( G/G_{\alpha}, d_{G/G_{\alpha}} ) \rbrace_{G_{\alpha} \in \sigma}$.
\end{defn}

For readers familiar with the abstract notion of \emph{coarse families} (c.f. \cite[Section 4]{EG-permanence}), we remark that the box family may also be defined in a similar way as $\widetilde{\square}_\sigma G$ in Definition \ref{defn:box-space}. This way we deduce that as a coarse family, the box family of $G$ with regard to $\sigma$ does not depend on the choice of $\dist_G$. From now on, unless we want to highlight the role of $\dist_G$, we are going to drop it and write simply $\widehat{\square}_\sigma G$ for the box family.

\subsection{Remarks on uniform coarse properties}
We have now introduced three closely related objects attached to a group $G$ together with a family $\sigma$ of subgroups. It is of no surprise that they share a great deal of coarse information. In particular, it is important to know that they have the same asymptotic dimension. To see this, we recall a definition of Guentner \cite[Section 4]{EG-permanence}.

\begin{defn}
 Let $\mathcal{X}:=\lbrace ( X_{\alpha}, d_{\alpha} ) \rbrace_{\alpha}$ be a family of metric spaces. The \emph{total space} $T(\mathcal{X})$ of $\mathcal{X}$ is defined to be the coarse space with underlying set $\sqcup_{\alpha} X_{\alpha}$ and equipped with the disjoint union metric coarse structure, which is generated by the entourage sets $\bigsqcup_{\alpha} \Delta_{R,\alpha} \subset T(\mathcal{X}) \times T(\mathcal{X})$ for all $R > 0$, where $\Delta_{R,\alpha} := \lbrace (x,x') \in X_\alpha \times X_\alpha \ | \ d_\alpha (x, x') \leq R \rbrace $ for each index $\alpha$.
\end{defn}

This choice of coarse structure is exactly the same as metrising each component using the metric analogous to what occurs in Remark \ref{rmk:metrising-box-space}(1). Hence the total space of the box family $\widehat{\square}_\sigma G$ is precisely the extended box space $\widetilde{\square}_{\sigma} G$. The main point of introducing the total space in this generality concerns uniform coarse properties and applications, in this uniform setting, of permanence results in the literature.  

\begin{prop}\label{prop:asdim-coarse-union}
 Let $G$ be a discrete group and $\sigma$ a family of finite index subgroups of $G$. Then 
 \[
  \asdim(\widetilde{\square}_{\sigma}G) = \asdim({\square}_{\sigma}G) = \asdim(\widehat{\square}_{\sigma}G) \; .
 \] 
\end{prop}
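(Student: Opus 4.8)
The plan is to prove the two equalities separately, treating first the case of a countable family $\sigma$ (where Remark~\ref{rmk:metrising-box-space} provides honest metrics) and then reducing the general case to countable subfamilies by Lemma~\ref{lem:asdim-countable} together with its evident analogue for the coarsely connected space $\square_\sigma G$. The equality $\asdim(\widetilde{\square}_\sigma G) = \asdim(\widehat{\square}_\sigma G)$ is essentially definitional: since $\widetilde{\square}_\sigma G$ is the total space $T(\widehat{\square}_\sigma G)$, any set of finite diameter is confined to a single component $G/G_\alpha$, so an $S$-bounded cover of $\widetilde{\square}_\sigma G$ of Lebesgue number $\geq R$ and multiplicity $\leq n+1$ is nothing but a choice of such covers on each $G/G_\alpha$ with one uniform constant $S$, which is exactly the uniform condition $\asdim(\widehat{\square}_\sigma G) \leq n$. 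It therefore remains to compare $\square_\sigma G$ with the box family.

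For the inequality $\asdim(\widehat{\square}_\sigma G) \leq \asdim(\square_\sigma G)$ I would take, for a given $R$, a corresponding $S$-bounded cover $\mathcal{U}$ of $\square_\sigma G$ of Lebesgue number $\geq R$ and multiplicity $\leq n+1$, and restrict it to each quotient, forming $\{\, U \cap (G/G_\alpha) : U \in \mathcal{U}\,\}$. The key point is that the metric of $\square_\sigma G$ restricts to the quotient metric $\dist_{G/G_\alpha}$ on each component, so each restricted family is $S$-bounded, has Lebesgue number $\geq R$ and multiplicity $\leq n+1$ in $(G/G_\alpha, \dist_{G/G_\alpha})$, with the single constant $S$ valid uniformly over all $\alpha$; this is precisely $\asdim(\widehat{\square}_\sigma G) \leq n$.

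The reverse inequality $\asdim(\square_\sigma G) \leq \asdim(\widehat{\square}_\sigma G)$ is the heart of the matter and is where coarse connectedness enters. Fixing $R$, I would choose $S_0$ so that every $G/G_\alpha$ admits an $S_0$-bounded cover $\mathcal{U}^\alpha$ of Lebesgue number $\geq R$ and multiplicity $\leq n+1$. By the defining property of $\square_\sigma G$ recorded in Remark~\ref{rmk:metrising-box-space}, one has $\dist(G/G_m, G/G_n) \to \infty$ as $m+n \to \infty$, so only finitely many pairs of components lie within distance $R$; hence there is an index $N$ such that each component $G/G_n$ with $n > N$ is at distance $> R$ from all others. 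Consequently every subset of diameter $\leq R$ sits either inside a single such separated tail component or inside the finite head $H := \bigsqcup_{n \leq N} G/G_n$. On the tail I retain the covers $\mathcal{U}^n$, which cannot interfere since those components are pairwise $>R$-separated; on $H$, a finite union of the subspaces $G/G_0, \dots, G/G_N$ each of asymptotic dimension $\leq n$, the finite-union permanence theorem for asymptotic dimension supplies an $S_1$-bounded cover of Lebesgue number $\geq R$ and multiplicity $\leq n+1$. Setting $S = \max(S_0, S_1)$ and taking the union of the head cover with the tail covers yields the desired cover of $\square_\sigma G$: the Lebesgue-number and multiplicity bounds persist because every diameter-$\leq R$ set lives in the head or in one isolated tail component, and the head and the distinct tail components occupy disjoint portions of the underlying set.

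I expect this last step to be the main obstacle: converting a uniform cover of the disconnected family into a genuine cover of the connected space $\square_\sigma G$. The crucial structural input is that coarse connectedness confines all cross-component interaction at a fixed scale $R$ to finitely many components, which is exactly what lets me split off a finite head---handled by finite-union permanence---from an infinite tail handled component-by-component. Some additional care is needed in the uncountable case to reduce, via Lemma~\ref{lem:asdim-countable}, to countable $\sigma$ so that the metric realisation of Remark~\ref{rmk:metrising-box-space} is available throughout.
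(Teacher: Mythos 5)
Your argument is correct in substance, but it takes a genuinely different route from the paper's: the paper disposes of the proposition in two lines, citing subspace permanence (\cite[Theorem 6.2]{EG-permanence}) for the inequalities $\asdim(\widetilde{\square}_{\sigma}G), \asdim({\square}_{\sigma}G) \leq \asdim(\widehat{\square}_{\sigma}G)$ and union permanence (\cite[Theorem 6.3]{EG-permanence}) for the reverse. Your restriction-to-components step is a hands-on instance of the former, and your head/tail decomposition is a hands-on proof of the relevant case of the latter; indeed the paper remarks immediately after its proof that the proposition ``may also be proved directly from the definition,'' and your write-up is precisely that direct proof. What your route buys is self-containedness and transparency about where coarse connectedness enters (at a fixed scale $R$, only finitely many components of $\square_\sigma G$ interact, since $d(G/G_m, G/G_n)\to\infty$ as $m+n\to\infty$); what the citation buys is brevity. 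One simplification available to you: each $G/G_\alpha$ is a finite quotient, hence a \emph{bounded} metric space, so the head $H=\bigsqcup_{n\leq N}G/G_n$ is itself bounded and is covered by the single set $H$ (with $S_1=\diam(H)$ and multiplicity $1$); invoking finite-union permanence there is correct but unnecessary.

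The one place you should be more careful is the uncountable case. The ``evident analogue of Lemma \ref{lem:asdim-countable} for $\square_\sigma G$'' that you invoke is exactly Proposition \ref{prop:box-space-countable}, which the paper deduces \emph{from} the present proposition together with Lemma \ref{lem:asdim-countable}; so as written your reduction is circular relative to the paper's development, and an independent proof is not purely formal. For uncountable $\sigma$ the space $\square_\sigma G$ is not metrisable: writing $X = \bigsqcup_{G_\alpha\in\sigma} G/G_\alpha$, its entourages are the subsets of sets of the form $\Delta_{F_0}\cup F_1$ with $F_0\subset G$ and $F_1\subset X\times X$ finite, and this directed family has no countable cofinal subfamily, so the diagonalisation over $S\in\Z^+$ in the proof of Lemma \ref{lem:asdim-countable} does not transfer verbatim. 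The clean fix is to observe that your head/tail argument already works at the level of entourages: given an entourage $E\subset \Delta_{F_0}\cup F_1$, the finite part $F_1$ meets only finitely many components, which (being finite sets) form a bounded head, while on all remaining components $E$ acts at the metric scale $R=\max_{g\in F_0} d_G(1,g)$; running your construction with this head/tail split proves the proposition for arbitrary $\sigma$ with no countability reduction, after which Proposition \ref{prop:box-space-countable} follows as in the paper.
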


\begin{proof}
 The inequalities $\asdim(\widetilde{\square}_{\sigma}G) \leq \asdim(\widehat{\square}_{\sigma}G)$ and $\asdim({\square}_{\sigma}G) \leq \asdim(\widehat{\square}_{\sigma}G)$ follows from the subspace permanence of asymptotic dimension (\cite[Theorem 6.2]{EG-permanence}), while the reverse inequalities follows from the union permanence of asymptotic dimension (\cite[Theorem 6.3]{EG-permanence}).
\end{proof}

The above proposition may also be proved directly from the definition of asymptotic dimension. 

\begin{rmk}\label{rmk:box-space-multiset}
 The asymptotic dimension of $\widehat{\square}_{\sigma}G$ does not change when some of its components are repeated (and thus also when some are coalesced). We observe that the same is true for $\asdim(\widetilde{\square}_{\sigma}G)$ and $\asdim(\square_{\sigma}G)$ by Proposition \ref{prop:asdim-coarse-union}.
\end{rmk}

Although when the group $G$ is not finitely generated (e.g.~ $\bigoplus_{\infty}\Z$), the family $\sigma$ may contain uncountably many different finite-index subgroups, the next proposition shows that it suffices to consider countable subfamilies.

\begin{prop}\label{prop:box-space-countable}
 Let $G$ be a discrete group and $\sigma$ a family of finite index subgroups of $G$. Then
 \[
  \asdim({\square}_{\sigma}G) = \sup_{\mathrm{countable}\ \tau \subset \sigma} \asdim({\square}_{\tau}G) \; .
 \]
 Moreover, if $\sigma$ is nested, then the above supremum may be taken over all nested countable subfamilies $\tau \subset \sigma$. 
\end{prop}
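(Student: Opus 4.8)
The plan is to reduce everything to the box family $\widehat{\square}_\sigma G$, which is genuinely a family of metric spaces, and then invoke the countability result for such families from Lemma \ref{lem:asdim-countable}. The two ingredients are already available: Proposition \ref{prop:asdim-coarse-union} identifies $\asdim(\square_\sigma G)$ with $\asdim(\widehat{\square}_\sigma G)$, and it does so for every choice of index family, so in particular it applies both to $\sigma$ and, simultaneously, to every subfamily $\tau \subset \sigma$.

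First I would write $\asdim(\square_\sigma G) = \asdim(\widehat{\square}_\sigma G)$ by Proposition \ref{prop:asdim-coarse-union}. Now $\widehat{\square}_\sigma G = \lbrace (G/G_\alpha, d_{G/G_\alpha}) \rbrace_{G_\alpha \in \sigma}$ is precisely a family of metric spaces indexed by (the multiset) $\sigma$, so Lemma \ref{lem:asdim-countable} applies verbatim and yields
\[
 \asdim(\widehat{\square}_\sigma G) = \sup_{\mathrm{countable}\ \tau \subset \sigma} \asdim(\widehat{\square}_\tau G) \; .
\]
Finally, applying Proposition \ref{prop:asdim-coarse-union} in reverse to each countable subfamily $\tau$ gives $\asdim(\widehat{\square}_\tau G) = \asdim(\square_\tau G)$, and substituting this into the displayed supremum produces the claimed equality. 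The only bookkeeping to verify is that a countable subset of the index set of $\widehat{\square}_\sigma G$, as furnished by Lemma \ref{lem:asdim-countable}, corresponds exactly to a countable subfamily $\tau \subset \sigma$ of finite-index subgroups, so that $\square_\tau G$ is defined; this is immediate from $\tau \subset \sigma$.

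For the ``moreover'' clause, suppose $\sigma$ is nested. Then any subfamily of $\sigma$ is again totally ordered by inclusion, hence nested; in particular every countable $\tau \subset \sigma$ occurring in the supremum above is automatically nested. Therefore restricting the supremum to nested countable subfamilies leaves its value unchanged, and the refined statement follows at once from the main equality.

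I expect no genuine obstacle here, since all the analytic content has been offloaded to the cited Lemma \ref{lem:asdim-countable} and Proposition \ref{prop:asdim-coarse-union}. The proof amounts to correctly threading the box-family-versus-coarse-space dictionary through subfamilies and recording the trivial observation that subfamilies of a nested family remain nested.
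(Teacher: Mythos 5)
Your proposal is correct and follows essentially the same route as the paper, which likewise deduces the proposition directly from Lemma \ref{lem:asdim-countable} combined with Proposition \ref{prop:asdim-coarse-union}, applied both to $\sigma$ and to each countable subfamily $\tau$. Your handling of the ``moreover'' clause (any countable subfamily of a nested family is itself nested) is the same elementary fact the paper invokes, just phrased slightly more directly than the paper's ``any countable subset of a nested set is contained in a countable nested subset.''
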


\begin{proof}
 This is a direct consequence of Lemma \ref{lem:asdim-countable} and Proposition \ref{prop:asdim-coarse-union}. The nested case follows from the fact that any countable subset of a nested set is contained in a countable nested subset. 
\end{proof}

The definition of asymptotic dimension given in Definition \ref{defn:asymptotic-dimension} deals with families of metric spaces by asking for the conditions to hold \textit{uniformly} throughout the family. The primary application of this we have seen so far is in Theorems \ref{prop:cf} and \ref{prop:cf2}, where a family of metric spaces played the role of the coarse ``kernel" of a Lipschitz (or large scale uniform) map. However, we can also apply this to the situation where we have a large scale uniform map of \textit{families}. Given a map of families (with the same indexing set)
\begin{equation*}
\lbrace f_{\alpha} : X_{\alpha} \rightarrow Y_{\alpha}\rbrace_{\alpha}
\end{equation*}
we observe that the family of coarse fibres associated to the induced map between total spaces $f= \bigsqcup_{\alpha} f_{\alpha} :T(\mathcal{X}) \rightarrow T(\mathcal{Y})$ is the union of the families of the coarse fibres associated to the individual $f_{\alpha}$'s.

In this situation, using the remarks prior to Corollary 4.12 from \cite{Brodskiy01062008}, there is a uniformised version of Theorem \ref{prop:cf2}:

\begin{thm}\label{prop:cf3}
Let $\lbrace f_{\alpha} : X_{\alpha} \rightarrow Y_{\alpha}\rbrace_{\alpha}$ be a uniformly large scale uniform map of metric families from $\mathcal{X}$ to $\mathcal{Y}$. Suppose that, for every $R>0$, the family of coarse fibres $\bigsqcup_{\alpha} \lbrace f_{\alpha}^{-1}(B_{R}(y)) \rbrace_{y \in Y}$ has asymptotic dimension $\leq n$ uniformly. Then:
\begin{equation*}
\asdim (T(X)) \leq \asdim (T(Y)) + n. 
\end{equation*}\qed
\end{thm}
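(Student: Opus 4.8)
The plan is to reduce the assertion to the single-map Hurewicz theorem (Theorem \ref{prop:cf2}) applied to the induced map between total spaces. Set $f := \bigsqcup_{\alpha} f_{\alpha} : T(\mathcal{X}) \to T(\mathcal{Y})$. First I would verify that $f$ is large scale uniform \emph{as a single map}. By the very definition of a (uniformly) large scale uniform family there is one control function $c : \R_{\geq 0} \to \R_{\geq 0}$, valid for \emph{all} $\alpha$ simultaneously, with $d_{X_{\alpha}}(x,x') \leq R \Rightarrow d_{Y_{\alpha}}(f_{\alpha}(x), f_{\alpha}(x')) \leq c(R)$. In the disjoint-union coarse structure of $T(\mathcal{X})$ (equivalently, under the extended metric that is $\infty$ between distinct components, as in Remark \ref{rmk:metrising-box-space}), any pair at finite distance lies within a single component; hence if $d_{T(\mathcal{X})}(x,x') \leq R < \infty$ then $x,x'$ belong to a common $X_{\alpha}$, and since $f$ sends $X_{\alpha}$ into $Y_{\alpha}$ we get $d_{T(\mathcal{Y})}(f(x),f(x')) = d_{Y_{\alpha}}(f_{\alpha}(x),f_{\alpha}(x')) \leq c(R)$. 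Thus the \emph{same} $c$ controls $f$.

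Next I would identify the coarse fibres of $f$. The decisive geometric feature of $T(\mathcal{Y})$ is that distinct components are infinitely (hence coarsely) separated, so for any $y \in Y_{\alpha}$ and any finite $R$ the ball $B_{R}(y)$ formed in $T(\mathcal{Y})$ coincides with the ball $B_{R}(y)$ formed inside $Y_{\alpha}$. Because $f$ maps $X_{\beta}$ into $Y_{\beta}$, only points of $X_{\alpha}$ can map into $Y_{\alpha}$, and therefore $f^{-1}(B_{R}(y)) = f_{\alpha}^{-1}(B_{R}(y)) \subseteq X_{\alpha}$. Running over all $y \in T(\mathcal{Y})$ shows that the family of coarse fibres of $f$ is exactly $\bigsqcup_{\alpha} \lbrace f_{\alpha}^{-1}(B_{R}(y)) \rbrace_{y \in Y_{\alpha}}$ — the union of the individual fibre families, as recorded in the discussion preceding the theorem. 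By hypothesis this family has asymptotic dimension $\leq n$ uniformly, for every $R > 0$.

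With these two observations the hypotheses of Theorem \ref{prop:cf2} are met by the single map $f$, and it yields $\asdim(T(\mathcal{X})) \leq \asdim(T(\mathcal{Y})) + n$, which is the claim.

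The hard part is purely formal: Theorem \ref{prop:cf2} is phrased for genuine (finite-valued) metric spaces, whereas the total space of a possibly uncountable family is only a coarse space (equivalently, an extended-metric space taking the value $\infty$ between components). This is precisely the gap closed by the remarks preceding Corollary 4.12 of \cite{Brodskiy01062008}, which establish the Hurewicz inequality in the coarse/family setting; the intended route is to invoke that version rather than the metric statement verbatim. Should one wish to remain with finite metrics, one can instead restrict to countable subfamilies, metrise the total spaces as in Remark \ref{rmk:metrising-box-space}, apply Theorem \ref{prop:cf2} there, and then pass back using the asymptotic-dimension identities of Proposition \ref{prop:asdim-coarse-union} together with a countable reduction in the spirit of Proposition \ref{prop:box-space-countable}; either route completes the argument.
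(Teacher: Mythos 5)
Your proposal is correct and takes essentially the same route as the paper: the paper likewise passes to the induced map $f = \bigsqcup_{\alpha} f_{\alpha}$ on total spaces, records (in the discussion just before the statement) that its coarse-fibre family is the union of the individual fibre families, and closes the extended-metric gap exactly as you do, by invoking the remarks prior to Corollary 4.12 of \cite{Brodskiy01062008} for the uniformised version of Theorem \ref{prop:cf2}. Your fallback countable-metrisation route would need slightly more care than indicated (the metrisation of Remark \ref{rmk:metrising-box-space}(2) uses boundedness of the components, and balls in a metrised total space can straddle finitely many components, so coarse fibres become finite unions of individual fibres, requiring a uniform finite-union theorem), but your primary argument is precisely the paper's intended one.
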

 
\subsection{Separating families}

We are particularly interested in box spaces that remember enough information about the original group. This would not be possible if the group does not have enough finite-index subgroups to work with. 
 
\begin{defn}\label{defn:separating}
 A family $\sigma$ of finite-index subgroups of $G$ is called \emph{separating} if for any non-empty finite subset $F \subset G \setminus \{1\}$, there is $G_{\alpha} \in \sigma$ such that $g \not\in G_{\alpha}$ for any $g \in F$.

 A group $G$ is called \emph{residually finite} if it has a separating family of finite-index subgroups. Equivalently, the family of \emph{all} finite-index subgroups of $G$ is separating. 
\end{defn}

The box space of a group $G$ with respect to a separating family $\sigma$ may remember a lot of information about $G$, as in the least, one can easily see that the family of quotient maps $\{G \to G/G_{\alpha} \}_{G_{\alpha} \in \sigma}$ has the following joint injectivity property: for any finite subset $F \subset G$, there is $G_{\alpha} \in \sigma$ such that the quotient map $G \to G/G_{\alpha}$ is injective on $F$. However, it may still not behave well enough in terms of the \emph{coarse geometric} information of $G$. For this we need a slightly stronger condition.

\begin{defn}\label{defn:semi-conjugacy-separating}
 A family $\sigma$ of finite-index subgroups of $G$ is called \emph{semi-conjugacy-separating} if for any non-empty finite subset $F \subset G$, there is $G_{\alpha} \in \sigma$ such that $G_{\alpha} \cap g^G G_{\alpha} = \varnothing$ for every $g \in F \setminus \{1\}$, where $g^G$ is the conjugacy class of $g$.
\end{defn}
 
\begin{rmk}
 We remark that when $\sigma$ is nested, both separating family (Definition \ref{defn:separating}) and semi-conjugacy-separating family (Definition \ref{defn:semi-conjugacy-separating}) can be defined by considering only the singleton finite sets $F=\lbrace g \rbrace$. In particular, the situation where $\sigma$ is a chain is discussed in \cite{SWZ2014}. 
\end{rmk}

\begin{lem}\label{lem:semi-conj-separating}
 Let $G$ be a group and $\sigma$ a collection of finite-index subgroups of $G$. Then the following are equivalent:
 \begin{enumerate}
  \item $\sigma$ is semi-conjugacy-separating.
  \item \label{lem:semi-conj-separating:trivial-intersection}For any finite subset $F \subset G$, there is $G_{\alpha} \in \sigma$ such that  $\bigcup_{k \in G} k G_{\alpha} k^{-1} \cap F \subset \{1\}$.
  \item \label{lem:semi-conj-separating:unbounded-injective-radii} For any finite subset $F \subset G$, there is $G_{\alpha} \in \sigma$ such that the quotient map $G \to G / G_{\alpha}$ is injective on $F \cdot k$ for any $k \in G$.
 \end{enumerate}
\end{lem}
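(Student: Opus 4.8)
The plan is to prove the chain of implications $(1) \Rightarrow (2) \Rightarrow (3) \Rightarrow (1)$, unwinding each condition to its set-theoretic content. The key observation throughout is that the conjugacy-class condition $G_\alpha \cap g^G G_\alpha = \varnothing$ is secretly a statement about which elements lie in the \emph{normal core-like union} $\bigcup_{k \in G} k G_\alpha k^{-1}$, and that injectivity of the quotient map on a translate $F \cdot k$ is governed by whether differences of elements land in this conjugated union.

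\emph{Proof of $(1) \Rightarrow (2)$.} Given a finite set $F \subseteq G$, apply semi-conjugacy-separating to obtain $G_\alpha$ with $G_\alpha \cap g^G G_\alpha = \varnothing$ for every $g \in F \setminus \{1\}$. The point is to show $g \notin \bigcup_{k} k G_\alpha k^{-1}$ for such $g$. Indeed, if $g = k h k^{-1}$ for some $h \in G_\alpha$ and $k \in G$, then $g^{-1}$ (or rather an appropriate conjugate) would exhibit a nonempty intersection; more precisely I would rewrite $g \in k G_\alpha k^{-1}$ as $k^{-1} g k \in G_\alpha$, and then observe that $k^{-1} g k \in g^G$, so that this element lies in $G_\alpha \cap g^G$, forcing $1 \in G_\alpha \cap g^G G_\alpha$ after right-multiplying by a suitable element of $G_\alpha$ — contradicting the separating condition. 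Thus $\bigcup_k k G_\alpha k^{-1} \cap F \subseteq \{1\}$.

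\emph{Proof of $(2) \Rightarrow (3)$.} Given finite $F \subseteq G$, first enlarge it to $F' := \{ ab^{-1} : a, b \in F \}$, which is still finite, and apply $(2)$ to $F'$ to obtain $G_\alpha$ with $\bigcup_k k G_\alpha k^{-1} \cap F' \subseteq \{1\}$. Now fix any $k \in G$ and suppose the quotient map identifies two points of $F \cdot k$, i.e.\ $a k G_\alpha = b k G_\alpha$ for $a, b \in F$. Then $(ak)(bk)^{-1} = a b^{-1} \in G_\alpha$, but also $ab^{-1} \in k^{-1} G_\alpha k$ after conjugating by $k$; the precise manipulation is $k^{-1}(ab^{-1})k \in G_\alpha$, which places $ab^{-1} \in k G_\alpha k^{-1} \subseteq \bigcup_j j G_\alpha j^{-1}$. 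Since $ab^{-1} \in F'$, condition $(2)$ forces $ab^{-1} = 1$, i.e.\ $a = b$. Hence the quotient map is injective on $F \cdot k$ for every $k$.

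\emph{Proof of $(3) \Rightarrow (1)$.} This is the near-converse, and I expect it to be the step requiring the most care, since I must produce the conjugacy-class separation from mere injectivity on all right translates. Given finite $F \subseteq G$, apply $(3)$ to obtain $G_\alpha$ such that $G \to G/G_\alpha$ is injective on $F \cdot k$ for all $k$. Fix $g \in F \setminus \{1\}$ and suppose toward contradiction that $G_\alpha \cap g^G G_\alpha \neq \varnothing$, i.e.\ there exist $h_1, h_2 \in G_\alpha$ and $k \in G$ with $h_1 = (k g k^{-1}) h_2$. Rearranging gives $k^{-1} h_1 = g k^{-1} h_2$, so the two elements $k^{-1} h_1$ and $g k^{-1} h_2$ of $G$ are equal, yet I want to read off a collision of the quotient map on a translate of $\{1, g\}$. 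The crux is choosing the right translating element: setting $k' := k^{-1} h_2$, one computes that $1 \cdot k'$ and $g \cdot k'$ map to the same coset since $g k' = g k^{-1} h_2 = k^{-1} h_1 \in k^{-1} G_\alpha$ while $k' = k^{-1} h_2 \in k^{-1} G_\alpha$, so both lie in $k^{-1} G_\alpha$ and hence in the same $G_\alpha$-coset. This contradicts injectivity of the quotient map on $\{1, g\} \cdot k' \subseteq F \cdot k'$ (assuming $\{1\} \subseteq F$, or otherwise applying the argument to $F \cup \{1\}$). Therefore $G_\alpha \cap g^G G_\alpha = \varnothing$ for all $g \in F \setminus \{1\}$, establishing $(1)$.

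The main obstacle is bookkeeping the left/right-handedness consistently: the box space uses right cosets $G/G_\alpha$ and a right-invariant metric, so injectivity on \emph{right} translates $F \cdot k$ pairs naturally with \emph{conjugation} (two-sided), and the algebra in $(3) \Rightarrow (1)$ must thread the conjugating element $k$ through to the correct translate $k'$. Once the translate is identified correctly, each implication reduces to an elementary coset computation.
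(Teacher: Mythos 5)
Your overall route is the same as the paper's: all three conditions get translated into statements about membership of elements of a difference set in the conjugation-invariant union $\bigcup_{k\in G} kG_{\alpha}k^{-1}$; arranging this as a cycle $(1)\Rightarrow(2)\Rightarrow(3)\Rightarrow(1)$ rather than the paper's two biconditionals is cosmetic. Your steps $(1)\Rightarrow(2)$ and $(3)\Rightarrow(1)$ are correct: in $(1)\Rightarrow(2)$ the element $k^{-1}gk \in G_{\alpha}\cap g^G \subseteq G_{\alpha}\cap g^G G_{\alpha}$ already gives the contradiction, and in $(3)\Rightarrow(1)$ your choice $k'=k^{-1}h_2$ is exactly the right normalization, with the $1\in F$ caveat properly handled.

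However, $(2)\Rightarrow(3)$ contains a genuine algebra error, and it is precisely the handedness issue you flagged. The lemma only makes sense with $G/G_{\alpha}$ read as \emph{left} cosets $x\mapsto xG_{\alpha}$: under the right-coset reading the translate cancels ($G_{\alpha}ak = G_{\alpha}bk$ iff $ab^{-1}\in G_{\alpha}$, independently of $k$), so condition (3) would degenerate to injectivity on $F$ itself, i.e.\ the ordinary separating condition, which is strictly weaker for non-normal families. With left cosets, $akG_{\alpha}=bkG_{\alpha}$ yields $(bk)^{-1}(ak)=k^{-1}(b^{-1}a)k\in G_{\alpha}$, i.e.\ $b^{-1}a\in kG_{\alpha}k^{-1}$. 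Both of your displayed deductions --- ``$(ak)(bk)^{-1}=ab^{-1}\in G_{\alpha}$'' (the right-coset computation) and the ``precise manipulation'' $k^{-1}(ab^{-1})k\in G_{\alpha}$ (which follows from neither convention) --- are false in general. Accordingly, the difference set to feed into (2) is $F^{-1}F=\{a^{-1}b \ | \ a,b\in F\}$, as in the paper's proof, not your $F'=FF^{-1}$. Your conclusion happens to be salvageable, because $\bigcup_{k}kG_{\alpha}k^{-1}$ is invariant under conjugation and $ab^{-1}=b\,(b^{-1}a)\,b^{-1}$, so $b^{-1}a$ lies in the union if and only if $ab^{-1}$ does; but that observation appears nowhere in your write-up, and without it the step as written does not go through. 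The one-line fix is to set $F':=F^{-1}F$ and conclude $b^{-1}a=1$ directly.
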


\begin{proof}
 (1) $\Leftrightarrow$ (2): (2) can be rewritten as: for any finite subset $F \subset G$, there is $G_{\alpha} \in \sigma$ such that for any $k \in G$ and $g \in F \setminus \{1\}$, $g \not \in k G_{\alpha} k^{-1}$. But the formula at the end is equivalent to $k^{-1} g k \not \in G_{\alpha}$, which is in turn equivalent to $k^{-1} g k G_{\alpha} \cap G_{\alpha} = \varnothing$. Thus the entire statement is equivalent to (1).
 
 (2) $\Leftrightarrow$ (3): (3) can be rewritten as: for any finite subset $F \subset G$, there is $G_{\alpha} \in \sigma$ such that for any $g_1, g_2 \in F$ and $k \in G$, if $g_1 ^{-1} g_2 \not = 1$, then $g_1 ^{-1} g_2 k \not \in kG_{\alpha} $. But the last formula is equivalent to $g_1 ^{-1} g_2 \not \in kG_{\alpha} k ^{-1} $. Thus the statement becomes that for any finite subset $F \subset G$, there is $G_{\alpha} \in \sigma$ such that for any $g \in F^{-1}F \setminus \{1\}$ and $k \in G$, we have $g \not \in kG_{\alpha} k ^{-1}$, which is then equivalent to (2).  
\end{proof}

Once we fix a proper right-invariant metric on $G$, the last condition in the above lemma says that the family of maps $\{ G \to G / G_{\alpha} \}_{G_{\alpha} \in \sigma}$ achieves arbitrarily large injective radii \textemdash~ we simply set $F = B_R(1)$ for an arbitrarily large positive number $R$. This reveals the importance of this condition regarding the coarse geometric information of the box space and the group.

On the other hand, condition (2) directly implies the following. 

\begin{cor}
 Every separating family of finite-index normal subgroups is semi-conjugacy-separating. The same is true for any family $\sigma$ of finite-index subgroups satisfying that there exists a separating family $\tau$ of finite-index normal subgroups such that for any $G_\beta \in \tau$ there is $G_{\alpha} \in \sigma$ such that $G_{\alpha} < G_\beta$. \qed
\end{cor}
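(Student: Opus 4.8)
The plan is to reduce both assertions to condition~(2) of Lemma~\ref{lem:semi-conj-separating}, which characterises semi-conjugacy-separation by the requirement that for every finite $F \subset G$ there be some $G_{\alpha} \in \sigma$ with $\bigcup_{k \in G} k G_{\alpha} k^{-1} \cap F \subseteq \{1\}$. The whole point is that for a normal subgroup the conjugation-closure collapses to the subgroup itself, so this condition becomes a plain separation requirement.

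First I would treat the normal case. If $\sigma$ is a separating family of finite-index normal subgroups, then normality gives $k G_{\alpha} k^{-1} = G_{\alpha}$ for every $k \in G$, so $\bigcup_{k \in G} k G_{\alpha} k^{-1} = G_{\alpha}$. Hence condition~(2) of Lemma~\ref{lem:semi-conj-separating} reduces to finding $G_{\alpha}$ with $G_{\alpha} \cap F \subseteq \{1\}$, i.e.~$g \notin G_{\alpha}$ for every $g \in F \setminus \{1\}$. Applying Definition~\ref{defn:separating} to the finite set $F \setminus \{1\} \subseteq G \setminus \{1\}$ produces exactly such a $G_{\alpha}$ (the case $F \subseteq \{1\}$ being vacuous, satisfied by any member of the nonempty family $\sigma$). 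Thus $\sigma$ is semi-conjugacy-separating.

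For the second assertion, suppose $\tau$ is a separating family of finite-index normal subgroups dominating $\sigma$ in the stated sense. Given a finite $F \subset G$, the first part applied to $\tau$ furnishes some $G_{\beta} \in \tau$ with $G_{\beta} \cap F \subseteq \{1\}$. By hypothesis choose $G_{\alpha} \in \sigma$ with $G_{\alpha} < G_{\beta}$. Since $G_{\beta}$ is normal, $k G_{\alpha} k^{-1} \subseteq k G_{\beta} k^{-1} = G_{\beta}$ for every $k \in G$, whence $\bigcup_{k \in G} k G_{\alpha} k^{-1} \subseteq G_{\beta}$ and therefore $\bigcup_{k \in G} k G_{\alpha} k^{-1} \cap F \subseteq G_{\beta} \cap F \subseteq \{1\}$. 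This verifies condition~(2) of Lemma~\ref{lem:semi-conj-separating} for $\sigma$, so $\sigma$ is again semi-conjugacy-separating.

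There is no serious obstacle here: the entire content is the collapse of the conjugation-closure for normal subgroups, together with the monotonicity $G_{\alpha} < G_{\beta} \Rightarrow \bigcup_{k} k G_{\alpha} k^{-1} \subseteq \bigcup_{k} k G_{\beta} k^{-1}$. The only point requiring a moment's care is matching the hypothesis of Definition~\ref{defn:separating}, which is phrased for finite subsets of $G \setminus \{1\}$, with the finite sets $F \setminus \{1\}$ arising from condition~(2); passing from $F$ to $F \setminus \{1\}$ handles this cleanly.
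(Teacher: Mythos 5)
Your proof is correct and follows exactly the route the paper intends: the corollary is stated as a direct consequence of condition~(2) of Lemma~3.8, and your argument --- that normality collapses $\bigcup_{k\in G} k G_{\alpha} k^{-1}$ to $G_{\alpha}$ in the first case, and that $G_{\alpha} < G_{\beta}$ with $G_{\beta}$ normal forces all conjugates of $G_{\alpha}$ into $G_{\beta}$ in the second --- is precisely the observation the paper leaves implicit behind its \qed. Your extra care about the degenerate set $F \subseteq \{1\}$ is a harmless refinement, not a deviation.
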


\begin{cor}
 A group is residually finite if and only if it has a semi-conjugacy-separating family of finite-index subgroups.
\end{cor}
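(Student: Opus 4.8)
The plan is to prove the two directions separately, using Lemma \ref{lem:semi-conj-separating}, Definition \ref{defn:separating}, and the preceding corollary (that every separating family of finite-index normal subgroups is semi-conjugacy-separating).

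For the ``only if'' direction I would start from a semi-conjugacy-separating family $\sigma$ and observe that it is in particular separating. Concretely, given a non-empty finite subset $F \subset G \setminus \{1\}$, semi-conjugacy-separation yields $G_\alpha \in \sigma$ with $G_\alpha \cap g^G G_\alpha = \varnothing$ for every $g \in F$; since $g$ lies in its own conjugacy class $g^G$ we have $g G_\alpha \subseteq g^G G_\alpha$, whence $G_\alpha \cap g G_\alpha = \varnothing$ and therefore $g \notin G_\alpha$. (This is also immediate from the reformulation in Lemma \ref{lem:semi-conj-separating} upon taking $k = 1$.) Thus $\sigma$ satisfies Definition \ref{defn:separating}, so $G$ is residually finite.

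For the ``if'' direction, suppose $G$ is residually finite, so that by Definition \ref{defn:separating} the family of all finite-index subgroups is separating. The idea is to upgrade this to a separating family of finite-index \emph{normal} subgroups and then invoke the preceding corollary. Given a finite $F \subset G \setminus \{1\}$, separation provides a finite-index subgroup $H$ with $H \cap F = \varnothing$; passing to the normal core $N := \bigcap_{k \in G} k H k^{-1}$ produces a finite-index normal subgroup with $N \subseteq H$, so that $N \cap F = \varnothing$ as well. Hence the family of all finite-index normal subgroups is separating, and by the preceding corollary it is semi-conjugacy-separating, which is exactly the family we seek.

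I do not anticipate a genuine obstacle here, as the statement is largely a repackaging of earlier results; the only step needing slight care is the passage to normal subgroups in the ``if'' direction, since Definition \ref{defn:separating} guarantees finite-index but not a priori normal subgroups. This is handled cleanly by the normal core, which is a finite-index normal subgroup contained in the original one and therefore still avoids the finite set $F$.
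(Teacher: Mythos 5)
Your proof is correct and takes essentially the same route as the paper: the paper likewise treats the implication from a semi-conjugacy-separating family to residual finiteness as trivial, and obtains the converse from the fact that every residually finite group has a separating family of finite-index \emph{normal} subgroups together with the preceding corollary --- your normal-core construction merely makes explicit what the paper leaves implicit. The only flaw is terminological: your labels are swapped, since in ``residually finite if and only if there is a semi-conjugacy-separating family'' the ``if'' direction is the one starting from such a family, and the ``only if'' direction is the one starting from residual finiteness.
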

\begin{proof}
 The ``if'' direction is trivial. The ``only if'' direction holds because every residually finite group has a separating family of finite-index normal subgroups.
\end{proof}

For us, the condition of large injective radii is crucial, as it allows us to perform lifts of uniformly bounded covers. Let us first state a simple lemma (c.f.~ \cite{SWZ2014}) that we are going to use in the next proposition.

\begin{lem}\label{lem:injectivity-radius-isometry-radius}
 Let $(X, \dist_X)$ be an $H$-space, where $H$ acts properly by isometries and $Y$ be the quotient of $X$ by the $H$ action. Let $\phi: X \to Y$ be the quotient map and $\dist_Y$ is the quotient metric of $\dist_X$ under $\phi$. Let $x \in X$ and $R> 0$ be such that $\phi$ is injective when restricted to $B_{3R}(x)$. Then $\phi$ is an isometry when restricted to $B_{R}(x)$.\qed
\end{lem}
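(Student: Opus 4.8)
The plan is to unwind the definition of the quotient metric and reduce the isometry claim to the injectivity hypothesis by a short triangle-inequality argument. Since $H$ acts by isometries, the quotient metric on $Y = X/H$ has the orbit-distance form
\[
 d_Y(\phi(p), \phi(q)) = \inf_{h \in H} d_X(p, h\cdot q),
\]
the infimum of $d_X$ over the two $H$-orbits. For any $p, q \in X$ the choice $h = e$ shows $d_Y(\phi(p), \phi(q)) \le d_X(p, q)$, so the inequality ``$\le$'' holds automatically, with no restriction on $p$ and $q$. The whole content of the lemma is therefore the reverse inequality $d_Y(\phi(p), \phi(q)) \ge d_X(p, q)$ for $p, q \in B_R(x)$, which I would establish by contradiction.

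So suppose $p, q \in B_R(x)$ but $d_Y(\phi(p), \phi(q)) < d_X(p, q)$. First, the triangle inequality gives $d_X(p, q) \le d_X(p, x) + d_X(x, q) \le 2R$. By the definition of the infimum there is then some $h \in H$ with
\[
 d_X(p, h\cdot q) < d_X(p, q) \le 2R .
\]
The purpose of this estimate is to pull $h \cdot q$ into the ball on which $\phi$ is injective: applying the triangle inequality once more,
\[
 d_X(x, h\cdot q) \le d_X(x, p) + d_X(p, h\cdot q) < R + 2R = 3R,
\]
so $h \cdot q \in B_{3R}(x)$. This is precisely where the factor $3$ in the hypothesis is consumed, and the constant is essentially sharp: one needs room $R$ to travel from $x$ to $p$ and up to a further $2R$ to reach the competing orbit point.

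Now both $q$ and $h \cdot q$ lie in $B_{3R}(x)$ --- the former because $B_R(x) \subset B_{3R}(x)$ --- and they have the same image, $\phi(h\cdot q) = \phi(q)$, since they lie in a common $H$-orbit. Injectivity of $\phi$ on $B_{3R}(x)$ then forces $h \cdot q = q$, whence $d_X(p, h\cdot q) = d_X(p, q)$, contradicting the strict inequality above. This yields $d_Y(\phi(p), \phi(q)) = d_X(p, q)$ for all $p, q \in B_R(x)$, i.e.\ $\phi$ restricts to an isometry on $B_R(x)$.

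I do not expect a serious obstacle; the argument is a packing estimate. The one point that warrants mild care is that the quotient metric is an infimum which need not a priori be attained, so rather than positing a minimizing $h$ I would extract a single $h$ beating $d_X(p,q)$ directly from the assumption $d_Y < d_X$ --- which is why the contradiction formulation is cleaner than trying to compute the infimum. It is worth noting that properness of the $H$-action plays no role in this particular lemma: only the fact that $H$ acts by isometries (so that the quotient metric takes the displayed orbit-distance form) and the injectivity hypothesis enter the proof.
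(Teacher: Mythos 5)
Your proof is correct: the paper states this lemma without proof (the \verb|\qed| refers the reader to \cite{SWZ2014}), and your argument --- bounding $d_X(p,q)\leq 2R$, extracting a single $h$ with $d_X(p,h\cdot q)<d_X(p,q)$ from the infimum, pulling $h\cdot q$ into $B_{3R}(x)$ by the triangle inequality, and invoking injectivity to force $h\cdot q=q$ --- is exactly the standard argument intended. Your side remarks are also accurate: the contradiction formulation correctly sidesteps attainment of the infimum, and properness is needed only to make the orbit-distance pseudometric a genuine metric, not for the isometry claim itself.
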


We need this lemma to prove:
 
\begin{prop}\label{prop:compare-asdim-box-space-with-group}
 Let $G$ be a residually finite group. Consider the action of $G$ on itself by right multiplication and fix a proper right-invariant metric. Let $\sigma$ be a semi-conjugacy-separating family of finite-index subgroups of $G$. Then $\asdim(G) \leq \asdim(\square_{\sigma} G)$.
\end{prop}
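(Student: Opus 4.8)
The goal is to lift any uniformly bounded cover of the box space (equivalently, of the box family $\widehat{\square}_\sigma G$, by Proposition~\ref{prop:asdim-coarse-union}) back to a cover of $G$ itself with controlled multiplicity and Lebesgue number. Suppose $\asdim(\square_\sigma G) = n$; then by Definition~\ref{defn:asymptotic-dimension} the family $\{G/G_\alpha\}_{G_\alpha \in \sigma}$ has asymptotic dimension $\leq n$ uniformly, so for any $R > 0$ there is an $S > 0$ such that each quotient $G/G_\alpha$ admits an $S$-bounded cover $\mathcal{U}^\alpha$ of Lebesgue number at least $3R$ and multiplicity at most $n+1$. The plan is to produce, for each such $R$, a single $G_\alpha \in \sigma$ whose cover $\mathcal{U}^\alpha$ pulls back to a good cover of $G$.

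The key tool is the large injective radius guaranteed by semi-conjugacy-separation. First I would fix $R > 0$ and obtain $S$ as above. I then invoke the semi-conjugacy-separating hypothesis via Lemma~\ref{lem:semi-conj-separating}(\ref{lem:semi-conj-separating:unbounded-injective-radii}): applying it to the finite set $F = B_{3S}(1)$, I obtain $G_\alpha \in \sigma$ such that the quotient map $\phi: G \to G/G_\alpha$ is injective on $B_{3S}(1) \cdot k$ for every $k \in G$; that is, $\phi$ is injective on every ball of radius $3S$ centred anywhere in $G$ (using right-invariance of the metric). The point of the radius $3S$ rather than $S$ is to feed Lemma~\ref{lem:injectivity-radius-isometry-radius}: viewing $G$ as a $G_\alpha$-space under left multiplication by $G_\alpha$ (acting properly by isometries, since the metric is right-invariant) with quotient $G/G_\alpha = G_\alpha \backslash G$, injectivity of $\phi$ on each $3S$-ball upgrades to the statement that $\phi$ restricts to an \emph{isometry} on every $S$-ball in $G$.

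With this isometry property secured, I would define the pulled-back cover $\mathcal{V} := \{\phi^{-1}(U) : U \in \mathcal{U}^\alpha\}$ of $G$ and check the three required properties. Multiplicity is immediate: since $\phi$ is a function, $\phi^{-1}$ preserves empty intersections, so $\mathcal{V}$ has multiplicity at most $n+1$. For the Lebesgue number, any subset $W \subseteq G$ of diameter at most $R$ lies in some ball $B_S(x)$ (as $R \leq S$), on which $\phi$ is an isometry; hence $\phi(W)$ has diameter at most $R$, is contained in some $U \in \mathcal{U}^\alpha$ by the Lebesgue number $3R \geq R$ of $\mathcal{U}^\alpha$, and so $W \subseteq \phi^{-1}(U)$. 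The boundedness of the members of $\mathcal{V}$ is the only delicate point and, I expect, \textbf{the main obstacle}: a preimage $\phi^{-1}(U)$ is a priori a union of $G_\alpha$-translates and need not be bounded as a whole. The resolution is that asymptotic dimension is a coarse invariant computed from bounded \emph{pieces}, so I would instead partition each $\phi^{-1}(U)$ into its intersections with the isometric $S$-balls (or argue that $\phi^{-1}(U)$ is a disjoint union of uniformly $S$-bounded sets that are mutually arbitrarily far apart, the separation coming from the large injective radius). Since a disjoint union of uniformly bounded, mutually far-separated sets has the same asymptotic-dimension contribution as a single bounded set, this refinement does not increase multiplicity beyond $n+1$ and keeps the Lebesgue number intact. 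Carrying out this separation estimate carefully — showing distinct $G_\alpha$-translates within a single $\phi^{-1}(U)$ are separated by a distance growing with the injective radius — is where the real work lies; once it is in place, the refined $\mathcal{V}$ witnesses $\asdim(G) \leq n$, completing the proof.
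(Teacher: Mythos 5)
Your proposal is correct and takes essentially the same route as the paper's own proof: the same invocation of Lemma~\ref{lem:semi-conj-separating}(3) with $F = B_{3S}(1)$ upgraded to isometry on $S$-balls via Lemma~\ref{lem:injectivity-radius-isometry-radius}, and the refined cover you describe (cutting each $\phi^{-1}(U)$ along the disjoint $S$-balls over $B_S(x_U)$) is exactly the paper's family $\{\widetilde{U}\cdot h\}_{h \in G_\alpha}$ with $\widetilde{U} := \phi^{-1}(U)\cap B_S(g_U)$. The separation estimate you defer as ``the real work'' is in fact a one-line triangle inequality: distinct pieces sit in balls $B_S(g_U h)$ and $B_S(g_U h')$ whose centres are more than $3S$ apart by injectivity, so the pieces are at distance greater than $3S - 2S = S \geq R$, which gives the Lebesgue number, while multiplicity follows as you say from disjointness of pieces within a single preimage.
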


\begin{proof}
 It suffices to show that if $\asdim(\widetilde{\square}_{\sigma} G) \leq n$ for some $n \in \Z^{\geq 0}$, then $\asdim(G) \leq n$. Let us fix a proper right-invariant metric $d_G$ on $G$, and induce a total-box-space metric $d_{\widetilde{\square}_{\sigma} G}$ on $\widetilde{\square}_{\sigma} G$ that restricts to the corresponding quotient metric of $d_G$ on each finite quotient (c.f. Remark \ref{rmk:metrising-box-space}). Given any $R > 0$, by the definition of the asymptotic dimension (\ref{defn:asymptotic-dimension}), there exists $S \geq R$ and an $S$-bounded cover $\mathcal{U}$ of $\widetilde{\square}_{\sigma} G$ with multiplicity no more than $n+1$ and Lebesgue number at least $R$. 
 
 Since we assumed $\sigma$ to be semi-conjugacy-separating, by condition (3) of \ref{lem:semi-conj-separating}, there exists $G_{\alpha} \in \sigma$, such that the quotient map $\pi_{G_{\alpha}}: G \to G/ G_{\alpha}$ is injective on $B_{3S}(1) \cdot g = B_{3S}(g)$ for any $g \in G$. By Lemma \ref{lem:injectivity-radius-isometry-radius}, this implies that $\pi_{G_{\alpha}}$ is isometric on $B_{S}(g)$ for any $g \in G$. Let $\mathcal{U}_{\alpha} := \{U \cap \: (G/ G_{\alpha}) \ | \ U \in \mathcal{U}\}$ be the restriction of $\mathcal{U}$ to $G/ G_{\alpha}$. 
 
 We are going to ``lift'' $\mathcal{U}_{\alpha}$ to an $G_{\alpha}$-invariant cover $\mathcal{V}$ of $G$. For each nonempty $U \in \mathcal{U}_{\alpha}$, since $\diam(U) \leq S$, it is contained in $B_S(x_U)$ for some (in fact, any) $x_U \in U$. Let $g_U \in G$ be such that $x_U = \pi_{G_{\alpha}} (g_U)$. Then as $d_{G/G_{\alpha}} := d_{\widetilde{\square}_{\sigma} G} |_{G/G_{\alpha}}$ is the quotient map of $d_G$, we have 
 \[
  \pi_{G_{\alpha}}^{-1}(B_S(x_U)) = P(\pi_{G_{\alpha}}^{-1}(x_U);S) = P(g_U G_{\alpha};S) = \bigcup_{h \in G_{\alpha}} B_{S}(g_U h)   \; .                                                                                                                                                                                                                                                                                                                      
 \]
 Since our choice of $G_{\alpha}$ ensures that for any different $h, h' \in G_{\alpha}$, $d(g_U h, g_U h') > 3S$, it follows that the right-hand side of the previous long equation is a disjoint union. Hence if we put $\widetilde{U} := \pi_{G_{\alpha}}^{-1} (U) \cap B_{S}(g_U)$, then the pre-image of $U$ under $\pi_{G_{\alpha}}$ decomposes as $\bigsqcup_{h \in G_{\alpha}} \widetilde{U} \cdot h$.
 
 We claim that $\mathcal{V} := \{ \widetilde{U} \cdot h \ | \ U \in \mathcal{U}_{\alpha}, \ h \in G_{\alpha} \}$ is an $S$-bounded cover of $G$ with multiplicity no more than $n+1$ and Lebesgue number at least $R$. Indeed, for any $U \in \mathcal{U}_{\alpha}$ and $h \in G_{\alpha}$, since $\widetilde{U} \cdot h \subset B_{S}( g_U h)$, on the latter of which $\pi_{G_{\alpha}}$ is isometric, it follows that $\diam(\widetilde{U} \cdot h) = \diam (U) \leq S$. To bound the multiplicity, we pick arbitrarily $(n+2)$ mutually different members $\widetilde{U}_0h_0, \widetilde{U}_1h_1, \ldots, \widetilde{U}_{n+1}h_{n+1}$ of $\mathcal{V}$. There are two cases:
 \begin{enumerate}
  \item $U_0, \ldots, U_{n+1}$ are mutually different: then since the multiplicity of $\mathcal{U}$ is at most $n+1$,
  \[
   \pi_{G_{\alpha}} \left( \bigcap_{j=0}^{n+1} \widetilde{U}_j h_j \right) = \bigcap_{j=0}^{n+1} {U}_j = \varnothing
  \]
  which implies $\bigcap_{j=0}^{n+1} \widetilde{U}_j h_j = \varnothing$;
  \item $U_i = U_j$ for some different $i, j \in \{0, \ldots, n+1\}$: then $h_i \not= h_j$ and 
  \[
   \bigcap_{j=0}^{n+1} \widetilde{U}_j h_j \subset  \widetilde{U}_i h_i \cap \widetilde{U}_j h_j = \widetilde{U}_i h_i \cap \widetilde{U}_i h_j = \varnothing \; .
  \]
 \end{enumerate}
 Hence we can bound the multiplicity of $\mathcal{V}$ by $n+1$. Lastly, given any non-empty subset $F \subset G$ with $\diam(F) \leq R$, since $\diam(\pi_{G_{\alpha}}(F)) \leq \diam(F) \leq R$, there is $U \in \mathcal{U}_{\alpha}$ such that $\pi_{G_{\alpha}}(F) \subset U$. In particular, $F \subset \bigsqcup_{h \in G_{\alpha}} \widetilde{U} h \subset \bigsqcup_{h \in G_{\alpha}} B_{S}(g_U h)$. Observe that for any different $h, h' \in G_{\alpha}$, since $d_G(h,h') > 3S$, 
 \[
  d_G(\widetilde{U} h, \widetilde{U} h') \geq d_G(B_{S}(g_U h), B_{S}(g_U h')) > 3S - S -S = S \geq R
 \]
 it follows that $F \subset \widetilde{U} h_0$ for some $h_0 \in G_{\alpha}$, which shows that the Lebesgue number of $\mathcal{V}$ is at least $R$.
\end{proof}

\section{Asymptotic dimension for box spaces of elementary amenable groups}
In this section we provide proofs of the results outlined in the introduction, using the ideas discussed in the previous sections. The basic strategy is to prove an extension formula for the asymptotic dimension of box spaces. To do this, we need first a lemma that provides compatibility between the metrics on quotients and subgroups of a group $G$. 

\begin{lem}\label{key-lemma}
 Let $1 \to N \overset{\iota}{\to} G \overset{\pi}{\to} K \to 1$ be a short exact sequence of discrete groups, where $\iota$ is considered to be an inclusion. Let $H$ be a (not necessarily normal) subgroup of $G$ and $p_G : G \to G/H$ and $p_K : K \to K/\pi (H)$ be the quotient maps. Then
 \begin{enumerate}
  \item there is a unique $G$-equivariant map $\rho: G/H \to K/\pi(H)$ such that $\rho \circ p_G = p_K \circ \pi$, where the action of $g\in G$ on $K/\pi(H)$ is given by left multiplication by $\pi(g)$;
  \item for any $k \in K$, the pre-image of $p_{K}(k) \in K/ \pi(H)$ under $\rho$ is equal to $p_G (g N)$, for any $g \in \pi^{-1}(k)$, and this pre-image is invariant under the action of $N \subset G$ on $G/H$ by left multiplication. 
  \item $[G : H] < \infty$ iff $[K, \pi(H)] < \infty$ and $[N, N \cap H] < \infty$. 
 \end{enumerate}
 Furthermore, let $\dist_G$ be a proper right-invariant metric on $G$. We obtain quotient metrics $\dist_{G/H}$ and $\dist_K$ from $\dist_G$, and then $\dist_{K/\pi(H)}$ from $\dist_K$. Then using the notations in Section \ref{subsection:notations}, we have
 \begin{enumerate}\setcounter{enumi}{3}
  \item $\dist_{K/\pi(H)}$ is the quotient metric of $\dist_{G/H}$ under $\rho$; 
  \item for any $R > 0$, any $k \in K$ and any $g \in \pi^{-1} (k)$, the pre-image of $P(p_{K}(k); R; \dist_{K/\pi(H)})$ under $\rho$ is equal to $p_G ( P(N ; R; \dist_G ) g )$, and moreover this set contains an $R$-net $p_G ( N g )$ isometric to $N / (N \cap g H g^{-1})$ with the quotient metric coming from $\dist_N$.
 \end{enumerate}
\end{lem}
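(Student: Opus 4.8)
The plan is to establish the five assertions in order, deducing each metric statement from the set-theoretic ones. For (1) I would simply \emph{define} $\rho(gH) := \pi(g)\pi(H)$; the requirement $\rho\circ p_G = p_K\circ\pi$ pins $\rho$ down on the image of $p_G$, which is everything, so uniqueness is automatic, while well-definedness follows since $gH=g'H$ forces $\pi(g)^{-1}\pi(g')\in\pi(H)$, and $G$-equivariance is immediate from $\pi$ being a homomorphism. For (2) I would compute the fibre directly: the condition $\rho(g'H)=\rho(gH)$ unwinds to $g^{-1}g'\in\pi^{-1}(\pi(H))=HN=NH$ (using that $N=\ker\pi$ is normal), and pushing the $N$-factor across $H$ by normality rewrites this as $g'H\in p_G(gN)$; the left $N$-invariance is the same normality computation, $m\cdot p_G(gN)=p_G(mgN)=p_G(g(g^{-1}mg)N)=p_G(gN)$. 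For (3) the point is that every fibre has the same cardinality: the orbit map $n\mapsto ngH$ gives a bijection $N/(N\cap gHg^{-1})\to p_G(Ng)=\rho^{-1}(p_K(k))$, and conjugation by $g$ identifies $[N:N\cap gHg^{-1}]$ with $[N:N\cap H]$. Hence the cardinal identity $[G:H]=[K:\pi(H)]\cdot[N:N\cap H]$, from which the finiteness equivalence is read off.

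For (4) I would work with the naive quotient infimum of $\dist_{G/H}$ along $\rho$ and reduce it using right-invariance of $\dist_G$ together with normality of $N$. Expanding $\inf\{\dist_{G/H}(x,y):\rho(x)=p_K(k),\ \rho(y)=p_K(k')\}$ over the fibres described in (2), one repeatedly absorbs the $N$-factors by writing $n'h=h\,(h^{-1}n'h)$ with $h^{-1}n'h\in N$, which collapses the expression to $\inf_{\bar h\in\pi(H)}\dist_K(\pi(g),\pi(g')\bar h)=\dist_{K/\pi(H)}(\rho(x),\rho(y))$. Since $\pi$ is $1$-Lipschitz, $\rho$ is visibly non-expanding for $\dist_{K/\pi(H)}$, and a standard argument identifying the quotient pseudometric as the largest non-expanding one then upgrades the naive infimum to the genuine quotient metric, giving (4).

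For the set-equality in (5) I would again argue by double inclusion at the level of $\dist_G$. Given $g'H$ in the preimage, the distance condition unwinds (using the quotient metrics for $K=G/N$ and for $K/\pi(H)$) to the existence of $h\in H$ and $m\in N$ with $\dist_G(g',ghm)\le R$; rewriting $ghm=gm'h$ by normality, moving $h$ across by right-invariance, and conjugating by $g$ exhibits a representative $y=g'h^{-1}g^{-1}$ with $g'H=ygH$ and $\dist_G(gm'g^{-1},y)\le R$, so $y\in P(N;R;\dist_G)$; the reverse inclusion is the same computation run backwards with $h=1$. The $R$-net claim is then cheap: any $ygH$ with $\dist_G(n_0,y)\le R$ and $n_0\in N$ satisfies $\dist_{G/H}(ygH,n_0gH)\le\dist_G(yg,n_0g)=\dist_G(y,n_0)\le R$, and $n_0gH\in p_G(Ng)$.

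The hard part will be the isometry in (5). Here I would use that $p_G(Ng)$ is exactly the left $N$-orbit of $gH$, with stabiliser $N\cap gHg^{-1}$, so the orbit map descends to the bijection $N/(N\cap gHg^{-1})\to p_G(Ng)$ of (3). Transporting metrics through right-invariance, the subspace distance becomes $\dist_{G/H}(ngH,n'gH)=\inf_{c\in gHg^{-1}}\dist_G(n,n'c)$, whereas the target quotient metric coming from $\dist_N$ is $\inf_{c\in N\cap gHg^{-1}}\dist_G(n,n'c)$. The crux is therefore to show that enlarging the infimum from $N\cap gHg^{-1}$ to all of $gHg^{-1}$ does not decrease it, i.e.\ that a minimising coset representative can always be chosen inside $N$. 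This is genuinely delicate, since the conjugated subgroup $gHg^{-1}$ reaches outside $N$ and one must rule out any ``shortcut'' through $G\setminus N$; I expect this step to consume most of the work and to rely essentially on the finite-index hypothesis $[N:N\cap gHg^{-1}]<\infty$ from (3) in tandem with right-invariance and the normality of $N$, and it is the point I would scrutinise most carefully before committing to the stated isometry.
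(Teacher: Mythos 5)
Your handling of (1)--(4), together with the set equality and the $R$-net claim in (5), is correct and essentially identical to the paper's own argument: the paper also defines $\rho(gH)=p_K(\pi(g))$, unwinds the fibre condition to $g^{-1}g'\in NH$, counts fibres (all of cardinality $[N:N\cap H]$) to get (3), deduces (4) formally from $\rho\circ p_G=p_K\circ\pi$ (quotient-of-quotient equals quotient under the composite, where your explicit absorption of $N$-factors is a legitimate hands-on substitute), and proves the set equality in (5) by the same double inclusion using normality of $N$ and right-invariance of $\dist_G$.

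The isometry in (5) is where your proposal stops, and your suspicion about it is exactly right \textemdash~ in fact the situation is worse than a delicate step. You correctly reduce the claim to the equality
\[
\inf_{c\in gHg^{-1}}\dist_G(n,n'c)\;=\;\inf_{c\in N\cap gHg^{-1}}\dist_G(n,n'c)\qquad (n,n'\in N),
\]
and this equality is false in general, so no argument from $[N:N\cap gHg^{-1}]<\infty$ can rescue it. Take $G=\Z$ with the usual metric, $N=2\Z$, $K=\Z/2$, $H=3\Z$, $g=0$. Then $p_G(N)$ is all of $G/H=\Z/3\Z$, a three-point space with all pairwise distances $1$, whereas $N/(N\cap H)=2\Z/6\Z$ with the quotient metric from $\dist_N$ is a three-point space with all pairwise distances $2$; no isometry exists. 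The culprit is an element $c=3\in H$ which is metrically close to $N$ without lying in $N$, providing exactly the ``shortcut through $G\setminus N$'' you worried about. Note that the paper's own proof does not close this gap either: its final sentence (``since $\beta_g$ preserves $d_G$, thus $d_{G/H}$ is also the quotient metric of $d_G$ under $p_G\circ\beta_g$, which implies that $p_G(Ng)$ and $N/(N\cap gHg^{-1})$ are isometric'') silently interchanges restriction to the $N$-orbit with formation of the quotient metric, which is precisely the unproved infimum equality you isolated.

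What your computation does prove is the correct statement: since every point $n'c$ with $c\in gHg^{-1}$ lies in the subgroup $N\cdot gHg^{-1}$, the subspace metric on $p_G(Ng)$ coincides with the quotient metric of $\dist_G$ restricted to $N\cdot gHg^{-1}$ under the map $N\cdot gHg^{-1}\to (N\cdot gHg^{-1})/gHg^{-1}$; the canonical bijection from $N/(N\cap gHg^{-1})$ onto this space is $1$-Lipschitz but in general not isometric, and not even uniformly coarsely invertible: with $H_j=3^j\Z$ in the example above, the points $\bar 0$ and $\overline{3^j+1}$ are at distance $1$ in $p_G(N)\subset \Z/3^j\Z$ but at distance $3^j-1$ in $2\Z/(2\cdot 3^j\Z)$. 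So the honest verdict is that your proof is complete except for the isometry, and that the isometry as stated cannot be proved because it fails; any repair must replace $N/(N\cap gHg^{-1})$ by the model $(N\cdot gHg^{-1})/gHg^{-1}$, or impose additional separation hypotheses on $H$ relative to $N$ (which is relevant to how this lemma is invoked later in the paper).
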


\begin{rmk}
We note that the maps defined in the statement fit into the following diagram:
\[
\xymatrix{
1 \ar@{->}[r] & H\cap N \ar@{->}[r]\ar@{->}[d] & H \ar@{->}[r] \ar@{->}[d]& \pi(H) \ar@{->}[r]\ar@{->}[d] & 1\\
1 \ar@{->}[r] & N \ar@{->}^{\iota}[r] \ar@{->}[d]& G \ar@{->}^{\pi}[r] \ar@{->}^{p_{G}}[d]& K \ar@{->}[r]\ar@{->}^{p_{K}}[d] & 1\\
1\ar@{->}[r] & N/H\cap N \ar@{->}[r] & G/H \ar@{-->}[r]^{\rho} & K/\pi(H) \ar@{->}[r] & 1}
\]
where the final line is not necessarily a short exact sequence of groups, but $G/H \rightarrow K/\pi(H)$ is the quotient map of metric spaces induced by $\pi$, and $N/N\cap H$ is a fibre of this map as a subspace of $G/H$.  
\end{rmk}

\begin{proof}[Proof of \ref{key-lemma}]
Let $N$, $G$, $K$ and $H$ be given as described.
\begin{enumerate}
 \item We define $\rho (g \cdot H) = p_K (\pi (g))$. We need to show it is well defined. Let $g_1, g_2 \in G$ be such that $g_1 ^{-1} g_2 \in H$, then $\pi(g_1) ^{-1} \pi(g_2) \in \pi(H)$, which implies that $p_K(\pi(g_1)) = p_K(\pi(g_2))$. This shows $\rho$ is well defined. It follows immediately from the $G$-equivariance of $p_K$ and $\pi$ that $\rho$ is also $G$-equivariant, while uniqueness follows from the surjectivity of $p_G$.
 \item Given $k \in K$ and $g \in \pi^{-1}(k)$, we see that for any $g' \in G$, the condition $\rho ( p_G(g') ) = p_{K}(k)$ is equivalent to $p_K (\pi (g')) = p_K (\pi (g))$, which is then rewritten as $\pi (g ^{-1} g') \in \pi(H)$, i.e.~ $g ^{-1} g' \in N H$. This is in turn the same as $g' \in g N H$, or equivalently, $g' H \subset g N H$, i.e.~ $p_G(g') \in p_G(g N)$. This proves the first claim, whence the second claim also follows, by the normality of $N$.
 \item This is an immediate consequence of the previous two conclusions, because $[G : H] = |G / H|$, $[K : \pi(H)] = | K/\pi(H)|$, and $[N : N \cap H] = |N / (N \cap H) | = |p_G(N)| = | g \cdot p_G(N)| = | p_G (g N) |$ for any $g \in G$, where we used the action $G \curvearrowright G/H$ by left multiplication.
 \item This follows from the identity of quotient maps $\rho \circ p_G = p_K \circ \pi$.
 \item Let $R,k,g$ be as given. For the first claim, we begin by proving the subset inclusion $p_{G}(P(N;R;d_{g})g) \subset \rho^{-1}(P(p_{K}(k);R;d_{K/\pi(H)}))$. To verify this, it is enough, by (1), to check that $p_{K}(\pi(P(N;R;d_{g})g)) \subset P(p_{K}(k);R;d_{K/\pi(H)})$.
 
Let $g' \in P(N;R;d_{g})g$. As $N$ is a normal subgroup in $G$ we obtain the following inequality:
\begin{equation*}
d_{G}(g',gN)= d_{G}(g',Ng) \leq R.
\end{equation*}
As both $d_{K}$ and $d_{K/\pi(H)}$ are quotient metrics, we have the inequality
\begin{equation*}
d_{K/\pi(H)}(p_{K}(\pi(g')),p_{K}(\pi(g)))\leq R.
\end{equation*}
This last inequality, again by (1), translates into:
\begin{equation*}
d_{K/\pi(H)}(\rho(p_{G}(g'), p_{K}(k)) \leq R.
\end{equation*}
Thus $p_{G}(g') \in \rho^{-1}(P(p_{K}(k);R;d_{K/\pi(H)}))$ as desired.
 
To complete the first part of the claim, we now check 
\begin{equation*}
\rho^{-1}(P(p_{K}(k);R;d_{K/\pi(H)})) \subset p_{G}(P(Ng;R;d_{G}).
\end{equation*}
Let $y \in \rho^{-1}(P(p_{K}(k);R;d_{K/\pi(H)}))$, let $g' \in G$ such that $p_{G}(g') = y$ and let $x = \rho(y)$. From the choices above, we have the following inequality:
\begin{equation*}
d_{K/\pi(H)}(x, p_{K}(k)) \leq R.
\end{equation*}
As both $d_{K}$ and $d_{K/\pi(H)}$ are quotient metrics, this is equivalent to the statement that the cosets  $g'NH$ and $gNH$ are at distance at most $R$ in $G$, and thus, by the normality of $N$ and the right invariance of $d_{G}$:
\begin{equation*}
d_{G}(g'H,gN)=d_{G}(g'NH,gNH)\leq R.
\end{equation*}
Hence, there exists a $h\in H$ such that $g^{'}h \in P(Ng;R;d_{G})$. This completes the proof, as $p_{G}(g'h)=p_{G}(g')=y$, and $P(Ng;R;d_{G}) = P(N;R;d_{G})g$ as the metric $d_{G}$ is right invariant.

 For the second claim, we see that since $N g \subset G$ is an $R$-net of $P(N ; R; \dist_G ) g$ and $\dist_{G/H}$ is the quotient metric of $\dist_G$ under $p_G$, it follows that $p_G ( N g )$ is an $R$-net for $p_G ( P(N ; R; \dist_G ) g )$. Furthermore, the right multiplication by $g$ defines an isometric bijection $\beta_g : G \to G$ that carries $N$ to $Ng$ and is equivariant with respect to the action of $G$ by left multiplication. Since $p_G \circ \beta_g$ maps $N$ to $p_G(Ng)$ equivariantly with respect to the left multiplication by $N$, and this action is transitive on the domain, we see that $p_G \circ \beta_g$ induces canonically a bijection between $p_G(Ng)$ and $N/ ((p_G \circ \beta_g)^{-1} (p_G \circ \beta_g)(1) ) = N / (N \cap g H g^{-1})$. Since $\beta_g$ preserves $d_G$, thus $d_{G/H}$ is also the quotient metric of $d_G$ under $p_G \circ \beta_g: G \to G/H$, which implies that $p_G(Ng)$ and $N / (N \cap g H g^{-1})$ are isometric.
\end{enumerate}
Thus we have proved all the claims.
\end{proof}

With this lemma in hand, we are now able to apply the Hurewicz-type theorems to obtain a permanence result about the asymptotic dimension of box spaces.

\begin{prop}\label{prop:technical-back-end}
 Let $1 \to N \to G \overset{\pi}{\to} K \to 1$ be a short exact sequence of discrete groups. Let $\sigma$ be a family of finite-index subgroups of $G$. Define $\pi(\sigma) := \big( \pi(G_{\alpha}) \big)_{\alpha}$ and $\widehat{\sigma} := \big( N \cap g G_{\alpha} g^{-1} \big) _{G_{\alpha} \in \sigma, g \in G}$. Then 
 \[
  \asdim (\square_\sigma G) \leq \asdim (\square_{\widehat{\sigma}} N) + \asdim (\square_{\pi(\sigma)} K) \; . 
 \]
\end{prop}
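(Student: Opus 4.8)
The plan is to realize $\square_\sigma G$ as a fibred space over $\square_{\pi(\sigma)} K$ whose fibres are uniformly coarsely equivalent to box spaces of $N$, and then apply the uniformised Hurewicz theorem (Theorem \ref{prop:cf3}). The entire technical content needed to set this up has been front-loaded into Lemma \ref{key-lemma}, applied with $H = G_\alpha$ for each $\alpha$. First I would pass to the box family description and work with $\widehat{\square}_\sigma G = \{(G/G_\alpha, d_{G/G_\alpha})\}_\alpha$, which by Proposition \ref{prop:asdim-coarse-union} has the same asymptotic dimension as $\square_\sigma G$; similarly I identify $\square_{\pi(\sigma)} K$ with the total space of $\{(K/\pi(G_\alpha), d_{K/\pi(G_\alpha)})\}_\alpha$. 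The map $\rho$ from Lemma \ref{key-lemma}(1) gives, for each $\alpha$, a $G$-equivariant map $\rho_\alpha : G/G_\alpha \to K/\pi(G_\alpha)$, and by part (4) this is the quotient map of metric spaces. Assembling these into $f = \bigsqcup_\alpha \rho_\alpha$ yields a map between total spaces that is large-scale uniform (indeed $1$-Lipschitz, since each $\rho_\alpha$ is a metric quotient map), and the target total space is precisely $\square_{\pi(\sigma)} K$.

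The crux is to verify the uniform coarse-fibre hypothesis of Theorem \ref{prop:cf3}: for every $R>0$, the family of coarse fibres $\bigsqcup_\alpha \{\rho_\alpha^{-1}(B_R(y))\}_{y}$ has asymptotic dimension $\leq \asdim(\square_{\widehat\sigma} N)$ uniformly. Here Lemma \ref{key-lemma}(5) does the decisive work: for each $k \in K$ and each $g \in \pi^{-1}(k)$, the preimage $\rho_\alpha^{-1}(P(p_K(k); R; d_{K/\pi(G_\alpha)}))$ contains an $R$-net $p_G(Ng)$ that is \emph{isometric} to $N/(N \cap g G_\alpha g^{-1})$ with its quotient metric from $d_N$. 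Since $\asdim$ is a coarse invariant and an $R$-net is coarsely dense, each coarse fibre has the same asymptotic dimension as the quotient $N/(N \cap g G_\alpha g^{-1})$, and these quotients — as $\alpha$ ranges over $\sigma$ and $g$ over $G$ — are exactly the components of the box family $\widehat{\square}_{\widehat\sigma} N$, since $\widehat\sigma = (N \cap g G_\alpha g^{-1})_{G_\alpha \in \sigma, g \in G}$. I would need to note that $N \cap g G_\alpha g^{-1}$ has finite index in $N$ by Lemma \ref{key-lemma}(3), so that these are genuine box-space components.

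The main obstacle will be promoting the pointwise isometry to a genuine \emph{uniform} bound on the asymptotic dimension of the coarse fibres, which is what Theorem \ref{prop:cf3} requires. The subtlety is that the $R$-net identification from Lemma \ref{key-lemma}(5) holds fibre-by-fibre, so I must argue that a uniformly bounded cover realizing $\asdim(\square_{\widehat\sigma} N) \leq n$ — which exists, for each $R'$, with some common bound $S$ across \emph{all} components $N/(N \cap g G_\alpha g^{-1})$ by the uniform definition of asymptotic dimension of a family (Definition \ref{defn:asymptotic-dimension}) — transports to a uniformly bounded cover of the coarse fibres $\rho_\alpha^{-1}(B_R(y))$ with the multiplicity and Lebesgue-number bounds controlled independently of $\alpha$, $y$, and $R$. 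The coarse equivalence constants between a coarse fibre and its $R$-net depend only on $R$ (the net is $R$-dense and the ambient set is an $R$-neighbourhood), so these constants are uniform once $R$ is fixed; combined with the uniform bound for the family $\widehat{\square}_{\widehat\sigma} N$, this yields the required uniform estimate on the fibres. Feeding this into Theorem \ref{prop:cf3} then gives
\[
 \asdim(\widetilde{\square}_\sigma G) \leq \asdim(\widetilde{\square}_{\pi(\sigma)} K) + \asdim(\widehat{\square}_{\widehat\sigma} N),
\]
and translating back through Proposition \ref{prop:asdim-coarse-union} produces the stated inequality.
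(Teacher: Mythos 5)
Your proposal is correct and follows essentially the same route as the paper's (first) proof: pass to the box family via Proposition \ref{prop:asdim-coarse-union}, assemble the $1$-Lipschitz maps $\rho_\alpha$ from Lemma \ref{key-lemma}(1) and (4), identify the coarse fibres up to uniform coarse equivalence with the components of $\widehat{\square}_{\widehat{\sigma}} N$ using the $R$-nets from Lemma \ref{key-lemma}(5), and apply Theorem \ref{prop:cf3}. Your explicit attention to the uniformity of the net--fibre coarse equivalence (constants depending only on $R$) is a point the paper compresses into the phrase ``by uniform coarse equivalence,'' so your write-up is, if anything, slightly more detailed on that step.
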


Notice that we do not need to assume any of the groups above to be residually finite. 

\begin{proof}
 Fix a proper right-invariant metric on $G$. Let $\widehat{\square}_\sigma G$ be the box family of $G$ with respect to $\sigma$. By Proposition \ref{prop:asdim-coarse-union}, $\asdim (\square_\sigma G) = \asdim (\widehat{\square}_\sigma G) $.
 
 By Lemma \ref{key-lemma}(1) and (4), for every $G_{\alpha} \in \sigma$, there is a $1$-Lipschitz map $\rho_{\alpha}: G/G_{\alpha} \to K/\pi(G_{\alpha})$. Together they form a $1$-Lipschitz family of maps $\rho: \widehat{\square}_\sigma G \rightarrow \widehat{\square}_{\pi(\sigma)} K$. Additionally using Lemma \ref{key-lemma}.(5), we have that for any $R>0$, for every $G_{\alpha} \in \sigma$ and $y \in K / \pi(G_{\alpha})$, the coarse fibre $\rho^{-1} \big( B_R(y; \dist_{K/\pi(G_{\alpha})}) \big)$ contains an $R$-net isometric to $N / (N \cap g G_{\alpha} g^{-1})$, where $g \in (p_K \circ \pi)^{-1} (y)$. Thus by uniform coarse equivalence, we see that 
 \begin{align*}
  & \ \asdim \left( \lbrace \rho^{-1} \big( B_R(y; \dist_{K/\pi(G_{\alpha})}) \big) \rbrace_{G_{\alpha} \in \sigma, y \in K / \pi(G_{\alpha})} \right) \\
  = & \ \asdim \left( \lbrace N / (N \cap g G_{\alpha} g^{-1}) \rbrace_{G_{\alpha} \in \sigma, g\in G} \right) \\
  = & \ \asdim \left( \lbrace N/M \rbrace_{M \in \widehat{\sigma}} \right) \\
  = & \ \asdim ( \widehat{\square}_{\widehat{\sigma}} N ) \; .
 \end{align*}
 
 Applying Theorem \ref{prop:cf3}, we obtain the bound: 
 \[
  \asdim (\widehat{\square}_\sigma G) \leq \asdim ( \widehat{\square}_{\widehat{\sigma}} N ) + \asdim (\widehat{\square}_{\pi(\sigma)} K) \: .
 \]
 The desired bound from the statement of the proposition now follows from Proposition \ref{prop:asdim-coarse-union}.
\end{proof}

One can also prove Proposition \ref{prop:technical-back-end} by using Theorem \ref{prop:cf2} (stated for a single metric space) instead of Theorem \ref{prop:cf3} (stated for a family of metric spaces):

\begin{proof}[Alternative proof of Proposition \ref{prop:technical-back-end}]
 By Proposition \ref{prop:box-space-countable}, it suffices to give a proof when $\sigma$ is countable. Since the case when $\sigma$ is finite is trivial, we assume it is infinite, and write $\sigma = \big( G_n \big)_{n \in \N}$ and $\pi(\sigma) = \big( \pi(G_n) \big)_{n \in \N}$. By Remark \ref{rmk:metrising-box-space}(2), after picking an increasing sequence $\lambda = (\lambda_n)_{n \in \N}$ of positive numbers approaching $\infty$ and a proper right-invariant metric $\dist_G$ on $G$, which induces the quotient metric $\dist_K$ on $K$, we may metrise $\square_\sigma G$ and $\square_{\pi(\sigma)} K$ by $\dist_{\sigma, \lambda, \dist_G}$ and $\dist_{\pi(\sigma), \lambda, \dist_K}$, respectively. 
 
 By Lemma \ref{key-lemma}(1) and (4), for every $G_{n} \in \sigma$, there is a $1$-Lipschitz map $\rho_{n}: G/G_{n} \to K/\pi(G_{n})$. The disjoint union of all the $\rho_{n}$ yields $f: {\square}_\sigma G \rightarrow {\square}_{\pi(\sigma)} K$, which is again $1$-Lipschitz because whenever $x$ and $x'$ come from two different finite quotients $G/G_{n}$ and $G/G_{n'}$ (say $n < n'$), we have by construction that
 \[
  \dist_{\pi(\sigma), \lambda, \dist_K} ( f(x), f(x') ) = \dist_{\sigma, \lambda, \dist_G} (x, x') = \sum_{k = n+1}^{n'} \lambda_k \; .
 \]
 Additionally, for any $R>0$ and any $y \in K / \pi(G_{n}) \subset {\square}_{\pi(\sigma)} K$, the coarse fibre $f^{-1} \big( B_R(y; \dist_{\pi(\sigma), \lambda, \dist_K}) \big)$ decomposes into $X_1 := f^{-1} \big( B_R(y; \dist_{\pi(\sigma), \lambda, \dist_K}) \big) \cap \big(G / G_{n}\big)$ and $X_2 := f^{-1} \big( B_R(y; \dist_{\pi(\sigma), \lambda, \dist_K}) \big) \setminus \big(G / G_{n}\big)$. The above equation implies that $X_2 \subset B_R(x; \dist_{\sigma, \lambda, \dist_G})$ for any $x \in X_1$, whence it follows from applying Lemma \ref{key-lemma}.(5) to $X_1$ that the coarse fibre $f^{-1} \big( B_R(y; \dist_{\pi(\sigma), \lambda, \dist_K}) \big)$ contains an $R$-net isometric to $N / (N \cap g G_{n} g^{-1})$, where $g \in (p_K \circ \pi)^{-1} (x)$. Thus arguing as in the first proof but applying Theorem \ref{prop:cf2} instead of Theorem \ref{prop:cf3}, we obtain $\asdim (\square_\sigma G) \leq \asdim (\square_{\widehat{\sigma}} N) + \asdim (\square_{\pi(\sigma)} K)$. 
\end{proof}

Using Proposition \ref{prop:technical-back-end}, we can prove the theorems stated in the introduction.

\setcounter{thmx}{0}

\begin{thmx}
 Let $1 \to N \to G \to K \to 1$ be a short exact sequence of discrete groups. Then 
 \begin{align*}
  \asdim (\square_\mathrm{F} G) \leq & \ \asdim (\square_\mathrm{F} N) + \asdim (\square_\mathrm{F} K) \; ;  \\
  \asdim (\square_\mathrm{f} G) \leq & \ \asdim (\square_\mathrm{f} N) + \asdim (\square_\mathrm{f} K) \; .
 \end{align*}
\end{thmx}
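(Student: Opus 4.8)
The plan is to deduce Theorem A directly from Proposition \ref{prop:technical-back-end}, which already establishes the subadditive bound $\asdim (\square_\sigma G) \leq \asdim (\square_{\widehat{\sigma}} N) + \asdim (\square_{\pi(\sigma)} K)$ for an \emph{arbitrary} family $\sigma$ of finite-index subgroups. The only work remaining is to choose $\sigma$ to be the defining family for the full (resp.~extended full) box space of $G$ and then to recognise the two families $\widehat{\sigma}$ and $\pi(\sigma)$ appearing on the right-hand side as (sub)families that compute, or at least do not exceed, the corresponding box-space asymptotic dimensions of $N$ and $K$.

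First I would treat the extended full case. Here $\sigma$ is the family $\mathrm{F}$ of \emph{all} finite-index subgroups of $G$. I would verify two containments. For the image family: by Lemma \ref{key-lemma}(3), whenever $[G:G_\alpha]<\infty$ we have $[K:\pi(G_\alpha)]<\infty$, so every $\pi(G_\alpha)$ is a finite-index subgroup of $K$; hence $\pi(\mathrm{F})$ is a subfamily of the family of all finite-index subgroups of $K$. For the intersection family: again by Lemma \ref{key-lemma}(3), $[N : N \cap gG_\alpha g^{-1}]<\infty$, so every member $N \cap gG_\alpha g^{-1}$ of $\widehat{\sigma}$ is a finite-index subgroup of $N$, and thus $\widehat{\sigma}$ is a subfamily of the family of all finite-index subgroups of $N$. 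Since passing to a subfamily can only decrease asymptotic dimension (this is the subspace/restriction behaviour of $\asdim$ for box families, cf.~Proposition \ref{prop:asdim-coarse-union} together with subspace permanence), we obtain $\asdim(\square_{\pi(\mathrm{F})}K) \leq \asdim(\square_{\mathrm{F}}K)$ and $\asdim(\square_{\widehat{\sigma}}N) \leq \asdim(\square_{\mathrm{F}}N)$. Combining these with Proposition \ref{prop:technical-back-end} yields the first inequality.

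For the full box space, $\sigma$ is the family $\mathrm{f}$ of all finite-index \emph{normal} subgroups of $G$, and the argument needs one extra point, since normality is not automatically preserved under the operations $\pi(\cdot)$ and $N \cap g(\cdot)g^{-1}$. I would handle this by the standard device of replacing each member by a normal core: given $G_\alpha \trianglelefteq G$ of finite index, the image $\pi(G_\alpha)$ is automatically normal in $K$ (the image of a normal subgroup under a surjection is normal), so $\pi(\mathrm{f})$ is already a subfamily of the normal finite-index subgroups of $K$, giving $\asdim(\square_{\pi(\mathrm{f})}K) \leq \asdim(\square_{\mathrm{f}}K)$. For the $N$-side, when $G_\alpha$ is normal in $G$ we have $gG_\alpha g^{-1}=G_\alpha$, so $\widehat{\sigma}$ collapses to the family $(N\cap G_\alpha)_{G_\alpha\in\mathrm{f}}$, and each $N\cap G_\alpha$ is normal in $N$ (indeed normal in $G$) of finite index in $N$; hence $\widehat{\sigma}$ is a subfamily of the normal finite-index subgroups of $N$, giving $\asdim(\square_{\widehat{\sigma}}N) \leq \asdim(\square_{\mathrm{f}}N)$. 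Feeding these into Proposition \ref{prop:technical-back-end} gives the second inequality.

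The main obstacle is purely bookkeeping rather than conceptual: one must be careful that $\widehat{\sigma}$ and $\pi(\sigma)$ genuinely land inside the \emph{correct} defining families, and that the monotonicity of $\asdim$ under restricting to a subfamily is being invoked legitimately. The subtle point is the normal case, where one must check that conjugation by $g$ does not take us outside the normal family — this is exactly where the collapse $gG_\alpha g^{-1}=G_\alpha$ and the normality of $N \cap G_\alpha$ in $N$ are needed. Once these containments are in place, the monotonicity inequalities are immediate from Proposition \ref{prop:asdim-coarse-union} and the subspace permanence of asymptotic dimension, and the theorem follows with no further computation.
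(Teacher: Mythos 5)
Your proposal is correct and follows essentially the same route as the paper: apply Proposition \ref{prop:technical-back-end} with $\sigma$ the family of all finite-index (resp.~all finite-index normal) subgroups of $G$, then use the monotonicity $\asdim(\square_\tau H) \leq \asdim(\square_{\mathrm{F}} H)$ (resp.~$\leq \asdim(\square_{\mathrm{f}} H)$) for subfamilies, noting via Lemma \ref{key-lemma}(3) that $\pi(\sigma)$ and $\widehat{\sigma}$ consist of finite-index subgroups, and in the normal case that $\pi(G_\alpha) \trianglelefteq K$ and $\widehat{\sigma}$ collapses to $(N \cap G_\alpha)_\alpha$ since $gG_\alpha g^{-1} = G_\alpha$. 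Your write-up is in fact slightly more explicit than the paper's (which compresses the normal case into a single ``notice that'' remark), and your passing mention of normal cores is unnecessary, as your own argument shows.
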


\begin{proof}
 By the definition of box spaces, for any group $H$, $\asdim (\square_\sigma H) \leq \asdim (\square_\mathrm{F} H)$ for any (not necessarily separating) family $\sigma$ of subgroups of $H$. If in addition $\sigma$ includes only normal groups, then $\asdim (\square_\sigma H) \leq \asdim (\square_\mathrm{f} H)$. 
 
 Let $\sigma$ be the family of all finite-index subgroups of $G$ and apply proposition \ref{prop:technical-back-end}. We see that 
 \begin{align*}
  \asdim (\square_\mathrm{F} G) & \leq \asdim (\square_{\pi(\sigma)} N) + \asdim (\square_{\widehat{\sigma}} K) \\
  & \leq \asdim (\square_\mathrm{F} N) + \asdim (\square_\mathrm{F} K) \; .
 \end{align*}
 The case for $\asdim (\square_\mathrm{f} G)$ is similar, except that we let $\sigma$ be the family of all finite-index \emph{normal} subgroups of $G$, and notice that $\pi(\sigma)$ and $\widehat{\sigma}$ both consist of only normal subgroups.
\end{proof}

\begin{thmx}\label{main-thm:bound-proof}
 For any $G \in \mathbf{Elem_{RF}}$ and any semi-conjugacy-separating family $\sigma$ of finite-index subgroups of $G$, we have 
 \[
  \asdim (G) \leq \asdim(\square_\sigma G) \leq \asdim(\square_{f} G) \leq \asdim(\square_{F} G) \leq h(G).
 \]
\end{thmx}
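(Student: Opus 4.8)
The plan is to establish the chain of four inequalities in Theorem \ref{main-thm:bound-proof} by reading off each link from a result already proved in the excerpt. The leftmost inequality $\asdim(G) \leq \asdim(\square_\sigma G)$ is precisely Proposition \ref{prop:compare-asdim-box-space-with-group}, which applies because $G$ is residually finite (being in $\mathbf{Elem_{RF}}$) and $\sigma$ is semi-conjugacy-separating by hypothesis; so this first step requires no work beyond citation. The two middle inequalities $\asdim(\square_\sigma G) \leq \asdim(\square_f G) \leq \asdim(\square_F G)$ follow from the monotonicity principle recorded at the start of the proof of Theorem A: enlarging the family $\sigma$ of subgroups can only increase the asymptotic dimension of the box space, since the smaller box family embeds as a subfamily (coalescing repeated components does not change $\asdim$, by Remark \ref{rmk:box-space-multiset}). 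Here one uses that any semi-conjugacy-separating $\sigma$ consists of finite-index subgroups, all of which appear in the full family of finite-index subgroups defining $\square_F G$, and that the normal finite-index subgroups defining $\square_f G$ sit inside that full family while containing enough to dominate $\square_\sigma G$ — more precisely, $\asdim(\square_\sigma G) \leq \asdim(\square_F G)$ directly, and the interpolation through $\square_f G$ uses that $\sigma$ can be refined by normal subgroups.

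The substantive content is the rightmost inequality $\asdim(\square_F G) \leq h(G)$, and this is where the inductive machinery does its work. My plan is to run a transfinite induction on the least ordinal $\alpha$ with $G \in \mathfrak{X}_\alpha$, exactly paralleling the proof of Theorem \ref{thm:elem-induction} but with $\asdim$ of the group replaced by $\asdim$ of the extended full box space and with Corollary \ref{cor:ext} replaced by the box-space extension formula of Theorem A (the first displayed inequality there). The base case is $\mathfrak{X}_1$, the virtually abelian groups: here I would invoke that $\asdim(\square_F A) \leq h(A)$ for virtually abelian $A$, which should follow from the Dranishnikov--Smith computation $\asdim(A) = h(A)$ together with the fact that box spaces of (virtually) abelian groups are coarsely controlled by the group itself in dimension. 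For the inductive step, when $G \in \mathfrak{X}_{\alpha+1} = (L\mathfrak{X}_\alpha)\mathfrak{X}_1$, one writes a short exact sequence $1 \to N \to G \to Q \to 1$ with $N \in L\mathfrak{X}_\alpha$ and $Q \in \mathfrak{X}_1$, applies Theorem A to get
\[
 \asdim(\square_F G) \leq \asdim(\square_F N) + \asdim(\square_F Q),
\]
and then bounds $\asdim(\square_F N) \leq h(N)$ by the induction hypothesis (treating the locally-finite/direct-limit layer via Proposition \ref{prop:box-space-countable} reducing to finitely generated subgroups, which land in $\mathfrak{X}_\alpha$, together with property (3) of the Hirsch length) and $\asdim(\square_F Q) \leq h(Q)$ by the base case. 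Since $h(G) = h(N) + h(Q)$ by the additivity of Hirsch length, this closes the induction.

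The main obstacle I anticipate is the treatment of the locally-$\mathfrak{X}_\alpha$ layer $L\mathfrak{X}_\alpha$, i.e. the direct-limit step, because Theorem A is stated for a single extension rather than for a direct union, and asymptotic dimension of box spaces need not be continuous under arbitrary direct limits. The device to overcome this is Proposition \ref{prop:box-space-countable}: since $\asdim(\square_F N) = \sup_{\text{countable } \tau} \asdim(\square_\tau N)$, and each countable family of finite-index subgroups only ``sees'' a countable, hence finitely-generated-exhaustible, portion of $N$, one can reduce the bound to finitely generated subgroups $F \leq N$ with $F \in \mathfrak{X}_\alpha$, for which the induction hypothesis gives $\asdim(\square_F F) \leq h(F) \leq h(N)$; the restriction of a box space of $N$ to such data is dominated, up to the subspace permanence of $\asdim$, by a box space of $F$. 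Care is needed to match the subgroups $N \cap g G_\alpha g^{-1}$ appearing in $\widehat{\sigma}$ (from Proposition \ref{prop:technical-back-end}) against the full family on $N$; but since these are all finite-index in $N$, they are subsumed by $\square_F N$, which is exactly why Theorem A is phrased in terms of $\square_F$. With that subtlety handled, the argument mirrors Theorem \ref{thm:elem-induction} and concludes.
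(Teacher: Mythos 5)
Your overall architecture is exactly the paper's: the first inequality is quoted from Proposition \ref{prop:compare-asdim-box-space-with-group}, the middle ones from the monotonicity remarks at the start of the proof of Theorem \ref{main-thm:inductive}, and the bound $\asdim(\square_\mathrm{F} G) \leq h(G)$ is obtained by transfinite induction over the hierarchy $\mathfrak{X}_\alpha$, pairing the subadditivity of Theorem \ref{main-thm:inductive} against the additivity $h(G) = h(N) + h(K)$, in parallel with Theorem \ref{thm:elem-induction}. The paper's own proof is in fact terser than yours: it cites the proposition, invokes Theorem \ref{main-thm:inductive} and the additivity of $h$, and says ``by transfinite induction,'' leaving both the base case and the direct-union (locally-$\mathfrak{X}_\alpha$) step implicit. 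So on the points the paper actually argues, you match it.

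Where you go beyond the paper --- and this is to your credit, since the $L\mathfrak{X}_\alpha$ layer genuinely needs an argument --- your proposed mechanism does not work as stated. Proposition \ref{prop:box-space-countable} reduces to \emph{countable subfamilies of subgroups}, but this does not localise anything to finitely generated subgroups: even a single finite-index subgroup $M$ of $N$ produces a component $N/M$ involving all of $N$. Worse, subspace permanence runs in the wrong direction for your purposes: $F/(F \cap xMx^{-1})$ embeds into $N/M$, not conversely, so ``the restriction of a box space of $N$ \ldots is dominated, up to the subspace permanence of $\asdim$, by a box space of $F$'' is false as written. The correct repair is scale-wise: fix $R>0$, let $F_R := \langle B_R(1;\dist_N) \rangle$, a finitely generated subgroup (hence in $\mathfrak{X}_\alpha$ when $N \in L\mathfrak{X}_\alpha$); by right-invariance, two cosets of $N/M$ at distance $\leq R$ lie in the same $F_R$-orbit, so each component $N/M$ splits into $F_R$-orbits that are pairwise more than $R$ apart, and a short computation shows each orbit agrees, at all scales $\leq R$, with $F_R/(F_R \cap xMx^{-1})$, a member of the box family $\widehat{\square}_\mathrm{F} F_R$, whose uniform asymptotic dimension is at most $h(F_R) \leq h(N)$ by the induction hypothesis; covers then transfer orbitwise. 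Two smaller caveats: your base case invokes a general principle that box spaces of virtually abelian groups are ``coarsely controlled by the group itself in dimension,'' which is not available in the paper --- what is actually needed (and what the paper also leaves to the reader) are the concrete inputs $\asdim(\square_\mathrm{F} \Z) \leq 1$ (families of discrete circles) and $\asdim(\square_\mathrm{F} L) = 0$ for locally finite $L$, the latter being the same orbit-decomposition argument with $F_R$ finite. Finally, your gloss that $\asdim(\square_\sigma G) \leq \asdim(\square_\mathrm{f} G)$ ``uses that $\sigma$ can be refined by normal subgroups'' is unsubstantiated for non-normal $\sigma$ (the monotonicity remark in the paper only covers $\square_\sigma \leq \square_\mathrm{F}$, and $\square_\sigma \leq \square_\mathrm{f}$ for normal families); the paper is equally silent here, and the final conclusion only needs $\asdim(\square_\sigma G) \leq \asdim(\square_\mathrm{F} G) \leq h(G)$, but you should not present that interpolation as proved.
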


\begin{proof}
 The first bound connecting the asymptotic dimension of $G$ to that of its box space $\square_{\sigma}G$ is precisely the content of Proposition \ref{prop:compare-asdim-box-space-with-group}. 
 
All that remains is to check $\asdim(\square_{F} G) \leq h(G)$. For any $N, G, K \in \mathbf{Elem_{RF}}$ such that $1 \to N \to G \to K \to 1$ is exact, by Theorem \ref{main-thm:inductive}, we have $\asdim (\square_\mathrm{F} G) \leq  \asdim (\square_\mathrm{F} N) + \asdim (\square_\mathrm{F} K)$, while at the same time, we have $h(G) = h(N) + h(K)$. Therefore by transfinite induction, we get that $\asdim (\square_\mathrm{F} G) \leq h(G)$ for any $G \in \mathbf{Elem_{RF}}$.
\end{proof} 

We now specialise to the class $\mathbf{LFin}\rtimes\mathbf{VPoly}$.

\begin{thmx}
 For any residually finite $G \in \mathbf{LFin}\rtimes\mathbf{VPoly}$ and any semi-conjugacy-separating family $\sigma$ of finite-index subgroups of $G$, we have 
 \[
   \asdim(G) = \asdim(\square_\sigma G) =\asdim(\square_f G) =\asdim(\square_F G) = h(G) \; .
 \]
\end{thmx}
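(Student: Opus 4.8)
The plan is to establish the chain of equalities by squeezing all the intermediate quantities between a common upper and lower bound, namely $h(G)$. By Theorem \ref{main-thm:bound-proof} (the previous theorem) we already have the string of inequalities
\[
 \asdim(G) \leq \asdim(\square_\sigma G) \leq \asdim(\square_f G) \leq \asdim(\square_F G) \leq h(G),
\]
valid for every residually finite $G \in \mathbf{Elem_{RF}}$, and since $\mathbf{LFin}\rtimes\mathbf{VPoly} \subset \mathbf{Elem_{RF}}$ this applies here. Thus the entire theorem reduces to proving the single reverse inequality $h(G) \leq \asdim(G)$; once that is in hand, all five quantities are forced to coincide.

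To prove $h(G) \leq \asdim(G)$, first I would unpack the hypothesis $G \in \mathbf{LFin}\rtimes\mathbf{VPoly}$: there is a split short exact sequence $1 \to N \to G \to K \to 1$ with $N$ locally finite and $K$ virtually polycyclic. The key structural observations are that $h(N) = 0$ (a locally finite group has Hirsch length zero, since its finitely generated subgroups are finite and hence have trivial torsion-free rank), so by additivity of Hirsch length $h(G) = h(N) + h(K) = h(K)$. Then I would invoke Proposition \ref{prop:lf-ext}, which states that for a split extension by a locally finite group one has $\asdim(G) = \asdim(K)$. Combining these with Theorem \ref{thm:DS}, which gives $\asdim(K) = h(K)$ for the virtually polycyclic group $K$, yields the desired chain
\[
 \asdim(G) = \asdim(K) = h(K) = h(G).
\]

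Putting the two halves together, I would write $h(G) = \asdim(G) \leq \asdim(\square_\sigma G) \leq \asdim(\square_f G) \leq \asdim(\square_F G) \leq h(G)$, which squeezes every term to equal $h(G)$ and completes the proof. The main conceptual work is entirely front-loaded into the results already established: the hard analytic content (the Hurewicz-type fibring used in Proposition \ref{prop:technical-back-end} and the comparison $\asdim(G) \leq \asdim(\square_\sigma G)$ from Proposition \ref{prop:compare-asdim-box-space-with-group}) is invoked rather than reproved. I expect the only genuinely delicate point to be verifying that the hypotheses of Proposition \ref{prop:lf-ext} and Theorem \ref{thm:DS} are met — in particular confirming that the semidirect-product decomposition is indeed split with a \emph{locally finite} kernel and a \emph{virtually polycyclic} quotient — rather than any new estimate, since the lower bound for the group propagates up to the box spaces purely through the already-established inequalities.
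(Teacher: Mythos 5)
Your proof is correct and follows exactly the paper's route: the upper bound chain comes from Theorem~\ref{main-thm:bound-proof}, and the lower bound $h(G) \leq \asdim(G)$ comes from combining Proposition~\ref{prop:lf-ext} with the Dranishnikov--Smith equality $\asdim(K)=h(K)$ for virtually polycyclic $K$ (Theorem~\ref{thm:DS}, i.e.~Theorem~3.5 of \cite{DS2006}). The only difference is that you spell out the details the paper leaves implicit, namely $h(N)=0$ for locally finite $N$ and the additivity $h(G)=h(N)+h(K)$, which is a fine elaboration rather than a new argument.
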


\begin{proof}

The upper bound follows from Theorem \ref{main-thm:bound-proof}. We only need to show that $\asdim G \geq h(G)$, but this follows from \cite[Theorem 3.5]{DS2006} and Proposition \ref{prop:lf-ext}.
\end{proof}

It is known by applying Theorem 3.2 of Gruenberg \cite{MR0087652} that for any polycylic group $H$ and abelian finite group $F$, the reduced wreath product $F\wr H$ is residually finite. Also notice that by the aforementioned theorem of Hillman and Linnell (Theorem \ref{thm:Hillman-Linnell}), we have $h(F\wr H) = h(H)$. Therefore applying Theorem \ref{main-thm:calculate} to $G := F\wr H$, we obtain the following corollary.

\begin{cor}\label{cor:wreath-product}
 Let $F$ be a finite abelian group and $H$ a virtually polycyclic group. Then for any semi-conjugacy-separating family $\sigma$ of subgroups of $F \wr H$, we have
 \[
  \asdim(F \wr H) = \asdim \big(\square_\sigma (F \wr H) \big) = h(F \wr H) = h(H) \; .
 \]\qed

\end{cor}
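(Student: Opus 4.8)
The plan is to derive Corollary~\ref{cor:wreath-product} as a direct application of Theorem~\ref{main-thm:calculate} to the specific group $G := F \wr H$, where $F$ is a finite abelian group and $H$ is virtually polycyclic. The key observation is that the reduced wreath product decomposes as a semidirect product $\left( \bigoplus_{h \in H} F \right) \rtimes H$, where $H$ acts on the direct sum by permuting the coordinates. First I would check that this group lies in the class $\mathbf{LFin} \rtimes \mathbf{VPoly}$: the base group $\bigoplus_{h \in H} F$ is a direct sum of copies of a finite group and is therefore locally finite (every finitely generated subgroup lives in a finite sub-sum), so it belongs to $\mathbf{LFin}$, while $H \in \mathbf{VPoly}$ by hypothesis. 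Thus $F \wr H$ is a semidirect product of a locally finite group by a virtually polycyclic group, placing it squarely in the scope of Theorem~\ref{main-thm:calculate}.

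The second ingredient is residual finiteness, which is required as a hypothesis of Theorem~\ref{main-thm:calculate}. Here I would invoke the result of Gruenberg (\cite{MR0087652}, Theorem 3.2) already cited in the excerpt: for $H$ polycyclic and $F$ finite, the reduced wreath product $F \wr H$ is residually finite precisely when $F$ is abelian. Since we assume $F$ is finite abelian, this applies directly. (One should note that Theorem~\ref{main-thm:calculate} is stated for \emph{residually finite} members of $\mathbf{LFin} \rtimes \mathbf{VPoly}$, so verifying residual finiteness is exactly what allows us to pass from the class-membership to the conclusion.) For $H$ merely virtually polycyclic rather than polycyclic one passes to a finite-index polycyclic subgroup, but since the statement is phrased with the cited theorem in mind, the polycyclic case is the essential content.

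Once both hypotheses are verified, Theorem~\ref{main-thm:calculate} yields the chain of equalities
\[
 \asdim(F \wr H) = \asdim \big( \square_\sigma (F \wr H) \big) = h(F \wr H)
\]
for any semi-conjugacy-separating family $\sigma$. The final task is to compute the Hirsch length $h(F \wr H)$ and identify it with $h(H)$. For this I would apply the theorem of Hillman and Linnell (Theorem~\ref{thm:Hillman-Linnell}): the maximal normal locally finite subgroup $\Lambda(F \wr H)$ contains the base group $\bigoplus_{h \in H} F$, and the quotient is (virtually) $H$ with the same Hirsch length as the whole group. Since the base group is locally finite it contributes Hirsch length zero, giving $h(F \wr H) = h(H)$. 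Alternatively, one may argue directly from property~(2) of Hirsch length (additivity over extensions): $h(F \wr H) = h\big(\bigoplus_{h} F\big) + h(H) = 0 + h(H)$, using that a locally finite group has Hirsch length $0$.

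I expect no genuine obstacle here, as the corollary is a specialisation rather than a new argument; the only points requiring care are the bookkeeping verifications: confirming that $\bigoplus_{h \in H} F \in \mathbf{LFin}$, that $F \wr H$ is residually finite via Gruenberg's criterion (which is where the hypothesis $F$ abelian is used), and that the Hirsch length of the locally finite base contributes nothing. The mildest subtlety is the virtually-polycyclic (as opposed to polycyclic) case in relation to the precise hypotheses of Gruenberg's theorem, but this is handled either by the finite-index reduction or by observing that the cited results are robust under passing to finite-index subgroups.
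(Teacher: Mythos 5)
Your proposal is correct and follows essentially the same route as the paper: verify residual finiteness via Gruenberg's theorem, identify $h(F \wr H) = h(H)$ (the paper cites Hillman--Linnell, Theorem \ref{thm:Hillman-Linnell}, while your alternative via additivity of Hirsch length over the locally finite base is equally valid and arguably more direct), and then apply Theorem \ref{main-thm:calculate} to $G := F \wr H$. Your explicit verification that $\bigoplus_{h \in H} F \in \mathbf{LFin}$ and your remark on the virtually polycyclic case are careful bookkeeping the paper leaves implicit, but they do not change the argument.
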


\bibliographystyle{alpha}
\bibliography{asdim-box-space-alpha}
\end{document}